\newtheorem{theorem}{Theorem}[section]
\newtheorem{proposition}[theorem]{Proposition}
\newtheorem{corollary}[theorem]{Corollary}
\newtheorem{problem}[theorem]{Problem}
\newtheorem{conjecture}[theorem]{Conjecture}
\theoremstyle{definition}
\newtheorem{definition}[theorem]{Definition}
\newtheorem{example}[theorem]{Example}
\newtheorem{remark}[theorem]{Remark}
\newcommand{\NN}{\mathbb{N}}
\newcommand{\ZZ}{\mathbb{Z}}
\newcommand{\QQ}{\mathbb{Q}}
\newcommand{\kk}{\Bbbk}
\DeclareMathOperator{\IN}{In}
\DeclareMathOperator{\Quot}{Quot}
\newcommand{\set}[1]{\{#1\}}
\newcommand{\with}{\ \vrule\ }
\newcommand{\LH}{\mathbf{L}_{n}}
\newcommand{\lh}{\ell}
\newcommand{\LHP}{{\mathbf{P}_{\mathrm{LH}}}}
\newcommand{\PP}{\mathbf{P}}
\newcommand{\mino}{\mathcal{E}}
\newcommand{\sm}[3]{\Delta^{#1}_{#2}(#3)}
\newcommand{\cvec}[1]{\begin{pmatrix} #1 \end{pmatrix}}
\begin{document} 

\title[]{The Lecture Hall Cone as a toric deformation}

\author{Lukas Katth\"an}
\address{Institut f\"ur Mathematik, Goethe-Universit\"at Frankfurt, Germany}
\email{katthaen@math.uni-frankfurt.de}

\date{\today}


\begin{abstract}
	The Lecture Hall cone is a simplicial cone whose lattice points naturally correspond to Lecture Hall partitions.
	The celebrated Lecture Hall Theorem of Bousquet-M\'elou and Eriksson states that a particular specialization of its multivariate Ehrhart series factors in a very nice and unexpected way.
	Over the years, several proofs of this result have been found, but it is still not considered to be well-understood from a geometric perspective.
	
	In this note we propose two conjectures which aim at clarifying this result.
	Our main conjecture is that the Ehrhart ring of the Lecture Hall cone is actually an initial subalgebra $A_n$ of a certain subalgebra of a polynomial ring, which is itself isomorphic to a polynomial ring.
	As passing to initial subalgebras does not affect the Hilbert function, this explains the observed factorization.
	We give a recursive definition of certain Laurent polynomials, which generate the algebra $A_n$. Our second conjecture is that these Laurent polynomials are in fact polynomials.
	
	We computationally verified that both conjectures hold for Lecture Hall partitions of length at most 12.
\end{abstract}

\maketitle

\section{Introduction}

\noindent A \emph{Lecture Hall partition}\footnote{Our indexing of the entries of $\lambda$ is reversed with respect to the usual convention.} is a finite sequence $\lambda = (\lambda_1, \lambda_2, \dotsc, \lambda_n) \in \ZZ^n$ satisfying
\[\frac{\lambda_1}{n} \geq \frac{\lambda_2}{n-1} \geq \dotsb \geq \frac{\lambda_n}{1} \geq 0. \]
The set $\LH$ of Lecture Hall partitions can be viewed as the set of lattice points in the \emph{Lecture Hall cone}, which is the cone over the simplex with vertices
\[ 
\cvec{1 \\0   \\   0\\\vdots\\ 0\\0},
\cvec{n \\ n-1\\   0\\\vdots\\ 0\\ 0},
\cvec{n \\ n-1\\ n-2\\\vdots\\ 0\\ 0},
\dotsc,
\cvec{n \\ n-1\\ n-2\\\vdots\\ 2\\ 0},
\cvec{n \\ n-1\\ n-2\\\vdots\\ 2\\ 1}.
\]
A Hilbert Basis for $\LH$ was given in \cite[Theorem 5.3]{BBKSZ}, see Remark \ref{rem:conj} below.
Lecture Hall partitions were introduced by Bousquet-M\'elou and Eriksson in \cite{BME}.
Their original motivation for considering Lecture Hall partitions is their \emph{Lecture Hall Theorem}.
\begin{theorem}[Lecture Hall Theorem, {\cite[Theorem 1.1]{BME}}]\label{thm:lht}	
	The number of Lecture Hall Partitions in $\LH$ which sum to $N \in \NN$ is equal to the number of partitions of $N$ into odd parts less than $2n$.
\end{theorem}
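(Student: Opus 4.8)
The plan is to recast the statement as the power-series identity
\[
\sum_{\lambda \in \LH} q^{\lvert\lambda\rvert} \;=\; \prod_{j=1}^{n} \frac{1}{1-q^{2j-1}}, \qquad \lvert\lambda\rvert := \lambda_1+\dots+\lambda_n,
\]
whose right-hand side is the generating function for partitions into odd parts less than $2n$. Since the $n$ displayed ray generators $v_1,\dots,v_n$ of $\LH$ are primitive and linearly independent (the matrix with rows $v_i$ is lower triangular with diagonal $1,n{-}1,n{-}2,\dots,2,1$, hence determinant $(n-1)!$), the cone is simplicial, so its univariate Ehrhart series is $\sum_{\lambda\in\LH}q^{\lvert\lambda\rvert}=\bigl(\sum_{b}q^{\lvert b\rvert}\bigr)\prod_{k=1}^{n}(1-q^{\lvert v_k\rvert})^{-1}$, where $b$ ranges over the lattice points of the half-open fundamental parallelepiped spanned by the $v_i$. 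Thus the identity is equivalent to the statement that the ``box polynomial'' $\sum_{b}q^{\lvert b\rvert}$ equals the rational function $\prod_{k=1}^{n}(1-q^{\lvert v_k\rvert})\cdot\prod_{j=1}^{n}(1-q^{2j-1})^{-1}$; in particular this rational function must be a polynomial with non-negative coefficients whose sum is $\prod_k\lvert v_k\rvert\big/\prod_j(2j-1)=(n-1)!$ (the number of box points), a useful consistency check on any argument.

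I would prove the power-series identity by induction on $n$, the case $n=1$ being $\sum_{\lambda_1\ge 0}q^{\lambda_1}=(1-q)^{-1}$. Write $\lambda=(\lambda_1,\lambda')\in\LH$ with tail $\lambda'=(\lambda_2,\dots,\lambda_n)$: the tail is exactly an arbitrary lecture hall partition of length $n-1$ (the inequalities $\tfrac{\lambda_2}{n-1}\ge\dots\ge\tfrac{\lambda_n}{1}\ge 0$ are precisely the defining ones in that length), and the only constraint coupling $\lambda_1$ to it is $\tfrac{\lambda_1}{n}\ge\tfrac{\lambda_2}{n-1}$, i.e.\ $\lambda_1\ge\lambda_2+\lceil\lambda_2/(n-1)\rceil$. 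Summing the geometric series in $\lambda_1$ then reduces $\sum_\lambda q^{\lvert\lambda\rvert}$ to $(1-q)^{-1}$ times a sum over length-$(n-1)$ lecture hall partitions in which not only $\lvert\lambda'\rvert$ but also the ``level'' $\lceil\lambda_2/(n-1)\rceil$ appears in the $q$-exponent. Hence the induction cannot run on $\sum q^{\lvert\cdot\rvert}$ alone: one must carry along an auxiliary statistic (such as $\lceil\lambda_1/n\rceil$, or the ``defect'' $\lambda_1-\lceil\tfrac{n}{n-1}\lambda_2\rceil$), find the one for which the resulting bivariate generating function obeys a closed recursion, and then solve that recursion — this refined identity (essentially a refined lecture hall theorem of Bousquet-M\'elou and Eriksson) is the heart of the matter. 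A more geometric avatar of the same recursion is the decomposition $\LH=\bigsqcup_{\ell\ge 0}\bigl(B_n+\ell\,v_n\bigr)$, where $v_n=(n,n-1,\dots,1)$ is the apex vertex and $B_n=\{\lambda\in\LH:\lambda_n=0\}$ (one checks $\lambda-v_n\in\LH$ exactly when $\lambda_n\ge 1$), which gives $\sum_\lambda q^{\lvert\lambda\rvert}=\bigl(\sum_{\beta\in B_n}q^{\lvert\beta\rvert}\bigr)\bigl(1-q^{\binom{n+1}{2}}\bigr)^{-1}$ and reduces the problem to the analogous count for the ``truncated'' chain $\tfrac{\lambda_1}{n}\ge\dots\ge\tfrac{\lambda_{n-1}}{2}\ge 0$.

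The main obstacle, by any of these routes, is the arithmetic of the ceiling thresholds $\lceil\tfrac{n}{n-1}\lambda_2\rceil$, which reflects the non-unimodularity of the cone (the fundamental parallelepiped carries $(n-1)!>1$ lattice points once $n\ge 3$): these nonlinearities make the naive recursion fail to close, and taming them forces either the extra ``level'' statistic above, a finer multi-parameter family of truncated lecture hall cones eliminated one inequality at a time via MacMahon's partition analysis, or a genuinely combinatorial device — e.g.\ an explicit $\lvert\cdot\rvert$-preserving bijection between $\LH$ and partitions into odd parts $<2n$. I expect the cleanest writeup to be the bivariate-recursion induction, using the box-polynomial reformulation of the first paragraph as a check on the outcome; and it is precisely the lack of a transparent, unimodular (``toric'') reason why these thresholds conspire to yield the product formula that the two conjectures of this note are meant to supply.
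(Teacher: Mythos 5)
There is a genuine gap: what you have written is a plan, not a proof, and the crucial step is explicitly left open. Your first two paragraphs are sound setup — the reformulation as the identity $\sum_{\lambda\in\LH}q^{|\lambda|}=\prod_{j=1}^n(1-q^{2j-1})^{-1}$, the observation that the cone is simplicial with primitive ray generators, the box-polynomial formula (and your consistency check $\prod_k|v_k|/\prod_j(2j-1)=(n-1)!$ is arithmetically correct), and the valid decomposition $\LH=\bigsqcup_{\ell\ge0}(B_n+\ell v_n)$. But none of these reduces the theorem to something you then establish. When you sum the geometric series over $\lambda_1$, the surviving exponent $\lceil n\lambda_2/(n-1)\rceil$ couples to the tail, and you yourself note that the induction "cannot run on $\sum q^{|\cdot|}$ alone" and that one must find an auxiliary statistic whose bivariate generating function "obeys a closed recursion" — yet you never exhibit such a statistic, never state the refined identity precisely, and never prove it. That refined identity is essentially the bivariate Lecture Hall Theorem of Bousquet-M\'elou and Eriksson (Theorem \ref{thm:lht2} here), i.e.\ it \emph{is} the theorem, not a lemma one may defer. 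Likewise the alternative routes you list (MacMahon partition analysis, an explicit weight-preserving bijection, the truncated-chain recursion after splitting off $v_n$) are named but not carried out, and the truncated chain $\frac{\lambda_1}{n}\ge\dots\ge\frac{\lambda_{n-1}}{2}\ge0$ is \emph{not} the length-$(n-1)$ lecture hall problem, so even that reduction does not close on its own.

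For calibration: the paper does not prove Theorem \ref{thm:lht} either — it cites it from \cite{BME}, notes that it follows from the bivariate version by setting $q_1=q_2$, and its actual contribution is the conjectural route via Hilbert series of an initial subalgebra (Conjectures \ref{conj:vague} and \ref{conj:sagbi}), under which the identity would follow because passing to $\IN_\prec(A_n)$ preserves the Hilbert series. So your diagnosis of where the difficulty lies (the ceiling thresholds, the non-unimodularity, the lack of a transparent toric reason for the factorization) matches the motivation of the paper, but as a proof of the stated theorem your text does not go through: the heart of the argument is acknowledged and then omitted. To fix it you would have to either prove the bivariate refinement by a genuine closed induction (specifying the statistic and verifying the recursion), or supply an explicit $|\cdot|$-preserving bijection with partitions into odd parts less than $2n$ — or simply cite \cite{BME}, as the paper does.
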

\noindent The generating function version of the Lecture Hall Theorem is the identity
\begin{equation}\label{eq:lh1}
	\sum_{\lambda \in \LH} q^{|\lambda|} = \prod_{i=1}^n \frac{1}{1-q^{2i-1}},
\end{equation}
where $|\lambda| := \sum_i \lambda_i$.
In this note we consider a bivariate refinement of \eqref{eq:lh1} which also due to Bousquet-M\'elou and Eriksson.
In order to state it we define
\begin{align*}
|\lambda|_o &:= \lambda_1 + \lambda_{3} + \lambda_{5} + \dotsb\\
|\lambda|_e &:= \lambda_{2} + \lambda_{4} + \lambda_{6} + \dotsb.
\end{align*}
\begin{theorem}[Lecture Hall Theorem, bivariate version {\cite[Eq. (2)]{BME}}]\label{thm:lht2}
It holds that
\begin{equation}\label{eq:lh2}
	 \sum_{\lambda \in \LH} q_1^{|\lambda|_o} q_2^{|\lambda|_e} = \prod_{i=1}^n \frac{1}{1-q_1^{i} q_2^{i-1}}.
\end{equation}
\end{theorem}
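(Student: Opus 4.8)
The plan is to prove the bivariate identity \eqref{eq:lh2} by induction on $n$, following the recursive structure of the Lecture Hall partitions — this is essentially the approach of Bousquet-M\'elou and Eriksson, reorganized. The key observation is that a Lecture Hall partition $\lambda = (\lambda_1, \dotsc, \lambda_n)$ of length $n$ can be ``peeled'': given $\lambda_n \geq 0$, the remaining truncation $(\lambda_1, \dotsc, \lambda_{n-1})$ satisfies the constraints for length $n-1$ \emph{up to a shift} controlled by $\lambda_n$, namely $\frac{\lambda_{n-1}}{1} \geq \lambda_n$ becomes the new nonnegativity-type constraint. To make this precise I would introduce, for a parameter $k \ge 0$, the shifted cone $\LH^{(k)}_{n}$ of sequences with $\frac{\lambda_1}{n} \geq \dotsb \geq \frac{\lambda_n}{1} \geq k$, and set up a generating function $F_n(q_1,q_2;k)$ tracking $|\lambda|_o$ and $|\lambda|_e$ over this shifted cone. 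The base case $n=1$ is the geometric series $\sum_{\lambda_1 \geq 0} q_1^{\lambda_1} = \frac{1}{1-q_1}$.

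The recursion step is where the real work lies. Writing $\lambda_n = j \geq 0$ and substituting $\lambda_i \mapsto \lambda_i$ for $i<n$, one splits the sum over $\lambda$ according to $j$ and must re-express the inner sum over $(\lambda_1,\dotsc,\lambda_{n-1})$ with the constraint $\frac{\lambda_{n-1}}{1} \geq j$. A change of variables $\lambda_i' = \lambda_i - (\text{appropriate multiple of } j)$ — concretely $\lambda_i' = \lambda_i - (n-i)j$ for $1 \le i \le n-1$ — turns this inner sum into a generating function over the \emph{unshifted} cone $\LH_{n-1}$, at the cost of a monomial prefactor in $q_1,q_2$ depending on $j$. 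Tracking the parities carefully, the shift contributes to $|\lambda|_o$ and $|\lambda|_e$ a total that, when summed over $j \geq 0$ as a geometric series, produces exactly the factor $\frac{1}{1 - q_1^{n} q_2^{n-1}}$ (up to swapping the roles of $q_1$ and $q_2$, which is why one wants the symmetric/shifted formulation rather than the bare statement). Combined with the inductive hypothesis $\prod_{i=1}^{n-1}\frac{1}{1-q_1^i q_2^{i-1}}$, this yields \eqref{eq:lh2}.

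The main obstacle is bookkeeping the odd/even index sets as $n$ increases: the entry $\lambda_i$ of the length-$n$ partition sits at index $i$, but after peeling off $\lambda_n$ the \emph{same} entry $\lambda_i$ now plays the role of index $i$ in a length-$(n-1)$ problem, so its parity assignment is unchanged — good — but the shift $\lambda_i' = \lambda_i - (n-i)j$ redistributes weight between $q_1$ and $q_2$ in a way that depends on the parity of $n-i$. One must check that the net exponent of $q_1$ coming from all the shifts of the odd-indexed entries, plus the exponent of $q_2$ from the even-indexed entries, equals $nj$ for $q_1$ and $(n-1)j$ for $q_2$ (or the swap thereof). This is a finite arithmetic identity — $\sum_{i \text{ odd}, i \le n-1}(n-i) + (\text{contribution of }\lambda_n=j\text{ itself})$ must come out right — but it is the step most prone to sign and parity errors, so I would verify it separately for $n$ even and $n$ odd. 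Everything else is formal manipulation of absolutely convergent power series in the region $|q_1|, |q_2| < 1$.
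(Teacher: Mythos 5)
The paper does not prove this statement at all --- it is quoted from Bousquet-M\'elou and Eriksson \cite{BME} (who derive it from Bott's formula for the affine Coxeter group $\tilde C_n$, and elsewhere by a much more delicate induction) --- so your proposal must stand on its own, and it has a genuine gap at its core. After peeling off $\lambda_n = j$, the remaining entries $(\lambda_1,\dotsc,\lambda_{n-1})$ satisfy
\[
\frac{\lambda_1}{n} \;\geq\; \frac{\lambda_2}{n-1} \;\geq\; \dotsb \;\geq\; \frac{\lambda_{n-1}}{2} \;\geq\; j,
\]
i.e.\ the denominators are $n, n-1, \dotsc, 2$, whereas the length-$(n-1)$ Lecture Hall conditions have denominators $n-1, n-2, \dotsc, 1$. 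Your substitution $\lambda_i' = \lambda_i - (n-i)j$ is a translation depending on $j$, and a translation cannot change the recession cone of the inner region: it turns the system into $\frac{\lambda_1'}{n} \geq \dotsb \geq \frac{\lambda_{n-1}'}{2} \geq 0$, which is \emph{not} the cone $\mathbf{L}_{n-1}$. Already at $n=3$ this breaks concretely: the inner sum runs over $\set{2\lambda_1 \geq 3\lambda_2 \geq 0}$, whose generating function is $\sum_{j\geq 0} q_2^{j} q_1^{\lceil 3j/2\rceil}/(1-q_1)$, and not $1/\bigl((1-q_1)(1-q_1^2q_2)\bigr)$ as the inductive hypothesis would require; the discrepancy appears in the coefficient of $q_2^2$. (The case $n=2$ happens to work, which can make the scheme look plausible.)

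This mismatch is exactly the well-known obstruction that makes the Lecture Hall Theorem resistant to naive induction: the lecture hall cone is not self-similar under truncation, which is why the total product formula holds even though the fibers over $\lambda_n=j$ do not individually factor as a monomial times the length-$(n-1)$ generating function. Your closing remark that the geometric series in $j$ should produce the factor $1/(1-q_1^n q_2^{n-1})$ ``up to swapping $q_1$ and $q_2$'' is a further warning sign, since the identity is not symmetric in $q_1,q_2$ and the bookkeeping cannot be left ambiguous. To salvage an inductive proof one must either track additional statistics (as Bousquet-M\'elou and Eriksson do in their refined recursions) or abandon the peeling of $\lambda_n$ in favor of a genuinely different decomposition; as written, the induction step fails.
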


Clearly, Theorem \ref{thm:lht} follows from Theorem \ref{thm:lht2} by setting $q_1 = q_2$.
There are numerous extensions and generalizations of Theorem \ref{thm:lht} and  Theorem \ref{thm:lht2} in the literature.
We refer the reader to the survey article by Savage \cite{savagesurvey} for a wealth of references.
However, even though there are a number of proofs of Theorem \ref{thm:lht}, the result is still not considered to be well understood:
\begin{quote}
	[...], Theorem 1.2 is hardly understood at all.
	This is in spite of the fact that by now there are many proofs, including those of Bousquet-M\'elou and Eriksson [8–10], Andrews [1], Yee [55,56], Andrews, Paule, Riese, and Strehl [3], Eriksen [31], and Bradford et al.~[11].
	We have also contributed to the collection of proofs with co-authors Corteel [25], Corteel and Lee [20], Andrews and Corteel [2], Bright [15], and, most recently, Corteel and Lovejoy [23].
\end{quote}
\hfill C.D. Savage, in \cite{savagesurvey}

In this note, we propose a new approach to the Lecture Hall Theorem.
Roughly speaking, we interpret the left-hand side of \eqref{eq:lh2} as the Hilbert series of the Ehrhart ring of $\LH$ with respect to a particular grading, and the
right-hand side of \eqref{eq:lh2} as the Hilbert series of a polynomial ring.
Then, we conjecture that Ehrhart ring of $\LH$ can be obtained as a \enquote{deformation} of a polynomial ring.

To make this idea precise, we recall some background on toric deformations.
Let $\kk$ be a field, $S := \kk[x_1, \dots, x_m]$ the polynomial ring over it and $\prec$ a term order on $S$.
For a finitely generated graded sub-$\kk$-algebra $A \subseteq S$ we consider the \emph{initial subalgebra} $\IN_\prec(A) := \kk[\IN_\prec(f) \with f \in A] \subset S$ of $A$, where $\IN_\prec(f)$ denotes the leading term of $f$.
Initial subalgebras have been studied in the context of \emph{SAGBI bases} (also known as \emph{canonical bases}), and we give some background on their theory in Section \ref{sec:sagbi}.
We offer the following conjecture:
\begin{conjecture}\label{conj:vague}
	Fix $n \in \NN$.
	There exist a $\ZZ^2$-graded polynomial ring $\kk[x_1, \dots, x_m]$ for some $m \in \NN$, a graded subalgebra $A_n \subseteq \kk[x_1, \dots, x_m]$ and a term order $\prec$ on $\kk[x_1, \dots, x_m]$, such that
	\begin{enumerate}[(a)]
		\item $A_n$ is isomorphic to a polynomial ring in $n$ variables of degrees $(1,0), (2,1), \dotsc, (n,n-1)$, and
		\item $\IN_\prec(A_n)$ is isomorphic to the Ehrhart ring of $\LH$.
	\end{enumerate}
\end{conjecture}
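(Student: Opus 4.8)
\medskip

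We do not yet have a proof of Conjecture~\ref{conj:vague}; the following describes the line of attack that we expect to succeed.

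\emph{Reduction.} The structural input is the fact recalled in Section~\ref{sec:sagbi}: for a $\ZZ^2$-graded subalgebra $A$ of a $\ZZ^2$-graded polynomial ring and any term order $\prec$, the bigraded Hilbert function of $\IN_\prec(A)$ equals that of $A$, since $\IN_\prec$ is multiplicative and sends a homogeneous element to a monomial of the same bidegree, so that $\dim_\kk\IN_\prec(A)_d=\dim_\kk A_d$ for all $d\in\ZZ^2$. Granting part~(a), the bigraded Hilbert series of $A_n$ is $\prod_{i=1}^{n}\bigl(1-q_1^{i}q_2^{i-1}\bigr)^{-1}$, which by Theorem~\ref{thm:lht2} is also the bigraded Hilbert series of the Ehrhart ring $\kk[\LH]$ once the monomial $x^{\lambda}$ is assigned the bidegree $(|\lambda|_o,|\lambda|_e)$. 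Hence, having fixed $A_n$, an embedding of $\kk[\LH]$ into the same polynomial ring (one expects $m=n$, with the class of $\lambda\in\LH$ sent to the monomial $x^{\lambda}$, and $\deg x_i=(1,0)$ for $i$ odd, $(0,1)$ for $i$ even), and a term order $\prec$, it is enough to prove the single containment $\kk[\LH]\subseteq\IN_\prec(A_n)$: equality in each finite-dimensional bidegree is then forced, and this is part~(b). Since $\IN_\prec(A_n)$ is a subalgebra and $\kk[\LH]$ is generated as a $\kk$-algebra by the \emph{finite} Hilbert basis of $\LH$ from \cite[Theorem 5.3]{BBKSZ}, the containment reduces to the following: for each Hilbert basis element $h$, exhibit $f_h\in A_n$ with $\IN_\prec(f_h)=x^{h}$.

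\emph{$A_n$ is a polynomial ring.} I would take $A_n:=\kk[g_1,\dots,g_n]$ for the recursively defined Laurent polynomials $g_1,\dots,g_n$. One first has to show that each $g_i$ is genuinely a polynomial --- this is the paper's second conjecture --- presumably by induction on $n$, the denominators produced by the recursive step cancelling because that step preserves a suitable monomial-divisibility invariant. Granting this, $g_i$ is bihomogeneous of bidegree $(i,i-1)$ by construction, so part~(a) comes down to the algebraic independence of $g_1,\dots,g_n$, which I would deduce from the leading monomials $m_i:=\IN_\prec(g_i)$: if their exponent vectors are linearly independent in $\ZZ^m$ --- a finite check once the $m_i$ are identified --- then the $m_i$ are algebraically independent, the monomials $\prod_i m_i^{\alpha_i}$ are pairwise distinct, and any nontrivial relation $\sum_{\alpha}c_{\alpha}\prod_i g_i^{\alpha_i}=0$ would have the nonzero leading monomial $\max_{\prec}\set{\prod_i m_i^{\alpha_i}\with c_{\alpha}\neq 0}$, a contradiction. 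This yields $A_n\cong\kk[y_1,\dots,y_n]$ with $\deg y_i=(i,i-1)$, hence part~(a), and incidentally the inclusion $\kk[m_1,\dots,m_n]\subseteq\IN_\prec(A_n)$.

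\emph{Lifting the Hilbert basis: the main obstacle.} The monomials $m_1,\dots,m_n$ fall well short of exhausting the Hilbert-basis monomials $\set{x^{h}}$ --- e.g.\ for $n\ge3$ the ray generator $(n,n-1,n-2,0,\dots,0)$ lies in the Hilbert basis and has bidegree $(2n-2,n-1)$, not of the form $(i,i-1)$ --- so essentially none of the lifts $f_h$ is free, and this is the heart of the matter. The mechanism is the usual one from SAGBI theory: a binomial relation $\prod_i m_i^{a_i}=\prod_i m_i^{b_i}$ among the leading monomials lifts to the element $\prod_i g_i^{a_i}-\prod_i g_i^{b_i}\in A_n$, whose leading monomial is strictly smaller than $\prod_i m_i^{a_i}$ and, one hopes, is a scalar multiple of $x^{h}$ for a new Hilbert basis element $h$; iterating this subduction should terminate with the leading monomials sweeping out precisely $\set{x^{h}}$ over the Hilbert basis. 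Concretely, I would aim to verify the analogue of Buchberger's criterion from Section~\ref{sec:sagbi} for $\set{g_1,\dots,g_n}$ enlarged by these finitely many lifts, i.e.\ that for a set of binomial generators of the toric ideal of the leading monomials of this enlarged set, the corresponding elements of $A_n$ subduce to zero. The obstacle is that doing this uniformly in $n$ demands simultaneous control of the minimal binomial generators of the toric ideal of the $m_i$ and of how the recursion for the $g_i$ behaves under subduction --- exactly the combinatorial model of the degeneration that is at present available only through the $n\le 12$ computations. A plausible route is a double induction peeling off the last variable, matched to the recursion for the $g_i$ (and, if it has one, to that of the Hilbert basis of \cite{BBKSZ}) with a controlled boundary correction; alternatively, one might sidestep subduction by exhibiting $A_n\rightsquigarrow\IN_\prec(A_n)$ as a known flat toric degeneration --- e.g.\ via a valuation on $\Quot(A_n)$ with value semigroup $\LH$, a Newton--Okounkov body / Rees algebra description --- so that flatness and the identification of the special fibre with $\kk[\LH]$ become automatic. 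Either way, the crux is a clean, $n$-uniform account of how the Lecture Hall semigroup $\LH$ arises from the leading terms of $A_n$.
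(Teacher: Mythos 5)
This statement is a conjecture, and the paper does not prove it: it only supplies the candidate construction of Sections~\ref{sec:lhp}--\ref{sec:sagbi} together with computational verification for $n\le 12$ (Theorem~\ref{thm:evidence}). Your outline is, in substance, that same program: take $A_n=\QQ[\lh_1,\dotsc,\lh_n]$ generated by the recursively defined Lecture Hall polynomials, assume Conjecture~\ref{conj:pi} so that $A_n$ lies in $\QQ[y_1,\dotsc,y_n]$ with the grading $\deg y_i=(1,0)$ or $(0,1)$, obtain part~(a) from bihomogeneity and algebraic independence (Proposition~\ref{prop:lhp2}, Corollary~\ref{cor:hilbert}), and reduce part~(b), via the invariance of Hilbert functions under passing to initial subalgebras and Theorem~\ref{thm:lht2}, to exhibiting for each Hilbert basis element of \cite{BBKSZ} an element of $A_n$ with that initial monomial --- exactly the reduction recorded in Remark~\ref{rem:conj}(c), with the same caveat the paper states, namely that this route leans on the Lecture Hall Theorem rather than explaining it. The one place the paper is more concrete than you are is the heart of the matter: it names explicit candidate lifts, the minors $\lh_S=-\det\sm{[\#S]}{S+1}{M(\LHP)}$ for $S\subseteq[n-1]$, whose number $2^{n-1}$ matches the cardinality of the Hilbert basis (Conjecture~\ref{conj:sagbi} and Table~\ref{tab:phi}), whereas you leave the lifts to be produced by subduction or a Newton--Okounkov construction.

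One concrete step in your sketch cannot work as written: you propose to prove algebraic independence of the generators from linear independence of the exponent vectors of their leading monomials $m_i=\IN_\prec(\lh_i)$. If those exponent vectors were independent, then no cancellation of leading terms could occur in any polynomial expression in the $\lh_i$, so the generators themselves would be a SAGBI basis and $\IN_\prec(A_n)=\QQ[m_1,\dotsc,m_n]$ would be generated by $n$ monomials --- incompatible with part~(b), since the Ehrhart ring of $\LH$ needs all $2^{n-1}>n$ Hilbert basis elements as generators for $n\ge 3$. And indeed, in all computed cases $\IN_\prec(\lh_i)=y_1^iy_2^{i-1}$, so the exponent vectors lie in a rank-two sublattice and are dependent as soon as $n\ge 3$; the algebraic dependence of the $m_i$ is precisely why a nontrivial SAGBI basis is needed at all. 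Algebraic independence of the $\lh_i$ has to come from elsewhere, and the paper's argument is the right one: $\lh_i$ involves $y_i$ nontrivially while $\lh_1,\dotsc,\lh_{i-1}$ do not (Proposition~\ref{prop:lhp1}(c)), a triangularity argument that requires neither Conjecture~\ref{conj:pi} nor any control of leading terms. With that repair your plan coincides with the paper's; the remaining obstacle --- an $n$-uniform proof that the proposed lifts (the $\lh_S$, or whatever subduction produces) have the Hilbert basis monomials as leading terms --- is exactly what is still open.
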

Note that the Hilbert series of $A_n$ in the conjecture equals
\[
\prod_{i=1}^n \frac{1}{1-q_1^iq_2^{i-1}}.
\]
Moreover, the Hilbert series of $A_n$ and $\IN_\prec(A)$ coincide \cite[Proposition 2.4]{CHV}, therefore Conjecture \ref{conj:vague} implies the Lecture Hall Theorem.

We are going to give a much more precise version of Conjecture \ref{conj:vague} below as Conjecture \ref{conj:sagbi}.
For this, we are first going to define the \emph{Lecture Hall polynomials} in Section \ref{sec:lhp}. 
These are Laurent polynomials which we conjecture to be polynomials (Conjecture \ref{conj:pi}) and which generate our candidate for the algebra $A_n$ in Conjecture \ref{conj:vague}.
We also have a conjecture for a minimal SAGBI basis of $A_n$, and in Theorem \ref{thm:evidence} we give some computational evidence.


If Conjecture \ref{conj:sagbi} is true, then it gives rise to a bijection $\varphi_n$ from the set of subsets of $[n-1]$ to the Hilbert basis of $\LH$.
In Section \ref{sec:phi} we give some properties of this map, and in Table \ref{tab:phi} we list its values for $n \leq 8$. 
Finding a description of this map for all $n$ might be a problem of independent interest.

\subsection*{Acknowledgments}
The author thanks Matthias Beck for bringing the Lecture Hall Theorem to the authors attention.
Moreover, I would like thank Victor Reiner, Benjamin Braun and Volmar Welker for several helpful discussions.

Research that led to this paper was supported by the National Science Foundation under Grant No.~DMS-1440140 while the author was in residence at the Mathematical Sciences Research Institute in Berkeley, California, during the Fall 2017 semester on \emph{Geometric and Topological Combinatorics}.

\section{Notation and conventions}
For $n \in \NN$ we set $[n] := \set{1, \dotsc, n}$. For a finite set $S$ we write $2^S$ for its power set.
For an $(n \times n)$-matrix $M$ and two sets $S,T \subseteq [n]$ of the same cardinality we write $\sm{S}{T}{M}$ for the submatrix of $M$ using the rows and columns with indices in $S$ and $T$, respectively.

Most of our discussion is independent on the group field $\kk$.
Therefore, we chose to work over the rationals $\QQ$ for concreteness.

\section{The Lecture Hall polynomials}\label{sec:lhp}

In this section, we are going to define the Lecture Hall polynomals, which generate our candidate for the algebra $A_n$ of Conjecture \ref{conj:vague}.

Let $S_n := \QQ[y_1, y_2, \dotsc, y_n]$ be a polynomial ring in $n$ variables.
It is convenient to set $S_\infty := \bigcup_n S_n$, using the natural inclusion $S_{n-1} \hookrightarrow S_n$.
For a sequence of polynomials $\PP := P_1, P_2, \dotsc$ in $S_\infty$ we define an infinite matrix $M(\PP)$ by setting
\[ M(\PP)_{i,j} := 
\begin{cases}
	-P_{j-i+1} &\text{ if } j \geq i\\
	0 & \text{ otherwise. }
\end{cases}
\]
Explicitly, $M(\PP)$ looks as follows:
\[ M(\PP) = \begin{pmatrix}
-P_1   & -P_2   & -P_3   & -P_4   & \dots \\
   0   & -P_1   & -P_2   & -P_3   & \dots \\
   0   &  0     & -P_1   & -P_2   & \dots \\
   0   &  0     &  0     & -P_1   & \dots \\
\vdots & \vdots & \vdots & \vdots & \ddots
\end{pmatrix}
\]
For $i \in \NN$ let 
\[\mino_i(\PP) := -\det\sm{[\lceil i/2\rceil]}{\set{\lfloor i/2\rfloor, \dotsc, i}}{M(\PP)},\]
i.e., the negative of the minor of $M(\PP)$ which uses the $\lceil\frac{i}{2}\rceil$ many top rows and the columns with indices $\lfloor\frac{i}{2}\rfloor, \dotsc, i$.
One might think of $\mino_i(\PP)$ as the negative of the determinant the maximal top-aligned square submatrix of $M(\PP)$, whose top right corner is $-P_i$ and which does not contain any of the zeros of $M(\PP)$.

\begin{definition}
	We define the \emph{Lecture Hall sequence} to be the sequence $\LHP := \lh_1, \lh_2, \dotsc$ of rational functions in $\Quot(S_\infty)$ which are defined by requiring that 
	\begin{equation}\label{eq:lhp}\tag{$\mathrm{LHS}_i$}
		\mino_i(\LHP) = y_1^{i}y_2^{i-1}\dotsm y_{i-1}^2y_i
	\end{equation}
	for all $i \geq 1$.	We call the elements of $\LHP$ \emph{Lecture Hall polynomials} and denote them with $\lh_1, \lh_2, $ and so on.
\end{definition}
We talk about \enquote{Lecture Hall polynomials} instead of \enquote{Lecture Hall rational functions}, because they are Laurent polynomials (Proposition \ref{prop:lhp1}) and we conjecture them to be actual polynomials.

\begin{example}
	Here we compute the first Lecture Hall polynomials. 
	The equations ($\mathrm{LHS}_i$) for $i = 1,\dotsc, 4$ are:
	\begin{align*}
		y_1					&= \mino_1(\LHP) = - \det(-\lh_1) = \lh_1 \\
		y_1^2y_2			&= \mino_2(\LHP) = - \det(-\lh_2) = \lh_2 \\
		y_1^3y_2^2y_3		&= \mino_3(\LHP) = - \det\begin{pmatrix} -\lh_2 & -\lh_3 \\ -\lh_1 & -\lh_2 \end{pmatrix} = \lh_1\lh_3 - \lh_2^2 \\
		y_1^4y_2^3y_3^2y_4	&= \mino_4(\LHP) = - \det\begin{pmatrix} -\lh_3 & -\lh_4 \\ -\lh_2 & -\lh_3 \end{pmatrix} = \lh_2\lh_4 - \lh_3^2
	\end{align*}
	We read off the first two equations that $\lh_1 = y_1$ and $\lh_2 = y_1^2y_2$.
	Using this we can solve the third one for $\lh_3$ and obtain that $\lh_3 = y_1^3y_2^2 + y_1^2y_2^2y_3 = y_1^2y_2^2(y_1+y_3)$.
	This in turn allows us to solve the fourth equation for $\lh_4$, which yields that
	\[ \lh_4 = y_1^2y_2^3(y_1+y_3)^2 + y_4^2y_3^2y_2^2y_1.\]
\end{example}

\begin{proposition}\label{prop:lhp1}\leavevmode
	\begin{enumerate}[(a)]
		\item The Lecture Hall sequence is well-defined.
		\item Each $\lh_i$ is a Laurent polynomial and has coefficients in $\ZZ$.
		\item For each $i \geq 0$, the $i$-th Lecture Hall polynomial $\lh_i$ depends only on the variables $y_1, \dotsc, y_i$, and it is non-constant as a function of $y_i$.
	\end{enumerate}
\end{proposition}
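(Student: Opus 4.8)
The plan is to establish all three parts simultaneously by induction on $i$, reading the defining equations $(\mathrm{LHS}_i)$ as a recursion. The key observation is that the minor $\mino_i(\LHP)$, when expanded along its last column (whose entries are $-\lh_{\lceil i/2 \rceil}, -\lh_{\lceil i/2 \rceil + 1}, \dotsc, -\lh_i$), is a linear expression in $\lh_i$ whose coefficient is (up to sign) the minor of $M(\LHP)$ obtained by deleting that last column and the top row — and this coefficient involves only $\lh_1, \dotsc, \lh_{i-1}$. So I would first write $\mino_i(\LHP) = c_i \cdot \lh_i + r_i$, where both $c_i$ and $r_i$ lie in $\ZZ[y_1^{\pm1}, \dotsc, y_{i-1}^{\pm1}]$ by the inductive hypothesis applied to (a), (b), (c) for all smaller indices. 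Then $\lh_i = (y_1^i y_2^{i-1}\dotsm y_i - r_i)/c_i$, and the whole argument reduces to controlling the coefficient $c_i$: I need it to be a monomial in $y_1, \dotsc, y_{i-1}$ (so that division keeps us in Laurent polynomials with $\ZZ$ coefficients), and I need to check that $y_i$ genuinely appears.

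For part (a) and the Laurent-polynomial part of (b), the heart of the matter is identifying $c_i$. Expanding the determinant defining $\mino_i$ along the bottom-left-to-top-right, one sees that deleting the last column and the top row of $\sm{[\lceil i/2\rceil]}{\set{\lfloor i/2\rfloor,\dotsc,i}}{M(\LHP)}$ leaves a square submatrix of $M(\LHP)$ that is again top-aligned and zero-free in the same sense — in fact it is precisely the matrix whose negated determinant is $\mino_{i-2}(\LHP)$, possibly after a shift of indices. Thus I expect the recursion $\lh_i \cdot \mino_{i-2}(\LHP) = \pm(y_1^i\dotsm y_i - r_i)$, i.e. $c_i = \pm \mino_{i-2}(\LHP) = \pm y_1^{i-2}y_2^{i-3}\dotsm y_{i-2}$, which is a single monomial by $(\mathrm{LHS}_{i-2})$. (The small cases $i=1,2$ are the base, where $c_i = 1$.) This is the step I expect to be the main obstacle: pinning down exactly which submatrix appears as the cofactor, getting the index bookkeeping between $\lceil i/2\rceil$, $\lfloor i/2\rfloor$ and the even/odd cases right, and verifying the sign. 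Once $c_i$ is a monomial, $\lh_i = c_i^{-1}(y_1^i\dotsm y_i - r_i)$ is manifestly a Laurent polynomial with integer coefficients, giving (b), and the solvability gives (a).

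For part (c), that $\lh_i$ depends only on $y_1, \dotsc, y_i$: by induction $r_i \in \ZZ[y_1^{\pm1},\dotsc,y_{i-1}^{\pm1}]$ and $c_i$ is a monomial in $y_1, \dotsc, y_{i-1}$, while the target monomial $y_1^i\dotsm y_i$ introduces exactly the one new variable $y_i$, linearly; hence $\lh_i$ is a polynomial of degree $1$ in $y_i$ over $\Quot(S_{i-1})$, and in particular non-constant in $y_i$ — indeed its $y_i$-coefficient is $c_i^{-1}y_1^i y_2^{i-1}\dotsm y_{i-1}^2 \neq 0$, while the $y_i$-free part is $-c_i^{-1}r_i$. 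This also shows $\lh_i \notin S_{i-1}$, completing (c). The only thing to double-check is that $r_i$ really is independent of $y_i$; but $r_i$ is built from the entries $-\lh_1, \dotsc, -\lh_{i-1}$ of $M(\LHP)$ (the last column of the relevant submatrix is deleted when forming the cofactor of $\lh_i$), so by the inductive form of (c) none of those involves $y_i$, and we are done.
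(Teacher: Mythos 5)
Your argument is correct and is essentially the paper's own proof: both hinge on observing that the cofactor of the top-right entry $-\lh_i$ (delete top row and last column) coincides, by constancy of $M(\LHP)$ along diagonals, with the minor defining $\mino_{i-2}(\LHP)$, hence equals the monomial $y_1^{i-2}\dotsm y_{i-2}$ by $(\mathrm{LHS}_{i-2})$, so solving the equation, which is linear in $\lh_i$, stays within Laurent polynomials over $\ZZ$, and non-constancy in $y_i$ follows because the remaining terms are free of $y_i$. The only slip is cosmetic: the last column of the relevant submatrix has entries $-\lh_{\lfloor i/2\rfloor+1},\dotsc,-\lh_i$ rather than starting at $-\lh_{\lceil i/2\rceil}$ (these differ for even $i$), an index-bookkeeping point you already flagged and which does not affect the argument.
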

\begin{proof}
	Let $\PP := P_1, P_2, \dotsc$ be a sequence of polynomials in $S_\infty$.
	We note that $\mino_i(\PP)$ depends only on $P_1, \dotsc, P_i$, and it is linear in $P_i$.
	Therefore, \eqref{eq:lhp} implies that each $P_i$ is a rational function in the $P_j$ for $j < i$, and we can solve the equations \eqref{eq:lhp} in an iterative way for each $i$.
	Thus, $\LHP$ is well-defined.
	
	For the second item, note that for $i \geq 3$, the coefficient of $P_i$ in \eqref{eq:lhp} equals the determinant of $\sm{\set{2,\dotsc, \lceil i/2\rceil}}{\set{\lfloor i/2\rfloor, \dotsc, i-1}}{M(\LHP)}$ up to a sign.
	Since the entries of $M(\LHP)$ are constant along diagonals, the latter equals $\mino_{i-2}(\LHP)$. 
	By construction, this minor is a monomial, and thus solving for $\lh_i$ yields a Laurent polynomial with coefficients in $\ZZ$.
	
	For the last item, note that it follows from our discussion that $\lh_i$ is determined by ($\mathrm{LHS}_1$), ($\mathrm{LHS}_2$), $\dotsc$, ($\mathrm{LHS}_i$).
	But those equations involve only the variables $y_1, \dotsc, y_i$ and thus $\lh_i$ depends only on them.
	Further, the right-hand side of ($\mathrm{LHS}_i$) is non-constant as a function of $y_i$, and thus the same holds for the left-hand side.
	But the left-hand side is a polynomial in the $\lh_1, \dotsc, \lh_i$, and all $\lh_j$ for $j < i$ do not depend on $y_i$.
	Thus $\lh_i$ cannot be constant as a function of $y_i$.
\end{proof}

Based on computational evidence, we offer the following conjecture:
\begin{conjecture}\label{conj:pi}
	Each Lecture Hall polynomial $\lh_i$ is a polynomial.
\end{conjecture}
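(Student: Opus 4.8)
The plan is to argue by strong induction on $i$, the base cases $\lh_1 = y_1$ and $\lh_2 = y_1^2y_2$ being immediate, and to first pin down the exact content of the statement. Recall from the proof of Proposition~\ref{prop:lhp1} that for $i \geq 3$ the expression $\mino_i(\PP)$, viewed as a polynomial in $P_1, \dotsc, P_i$, is linear in $P_i$ with leading coefficient $\epsilon_i\,\mino_{i-2}(\PP)$ for some sign $\epsilon_i \in \set{\pm 1}$; write $\beta_i \in \ZZ[P_1, \dotsc, P_{i-1}]$ for the remainder, so that $\mino_i(\PP) = \epsilon_i\,\mino_{i-2}(\PP)\,P_i + \beta_i$. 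Specializing $\PP = \LHP$ and using $(\mathrm{LHS}_i)$ together with $(\mathrm{LHS}_{i-2})$ gives
\[
\epsilon_i\, y_1^{i-2}y_2^{i-3}\dotsm y_{i-2}\,\cdot\,\lh_i \;=\; y_1^{i}y_2^{i-1}\dotsm y_{i-1}^2\,y_i \;-\; \beta_i(\lh_1, \dotsc, \lh_{i-1}).
\]
By the induction hypothesis the right-hand side lies in $\ZZ[y_1, \dotsc, y_i]$, it has degree one in $y_i$, and its $y_i$-coefficient is divisible by the monomial $y_1^{i-2}y_2^{i-3}\dotsm y_{i-2} = \mino_{i-2}(\LHP)$. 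Hence $\lh_i$ has degree at most one in $y_i$, its $y_i$-coefficient $\epsilon_i\, y_1^2 y_2^2 \dotsm y_{i-1}^2$ is already a monomial, and $\lh_i$ is a polynomial \emph{if and only if}
\[
	\mino_{i-2}(\LHP) = y_1^{i-2}y_2^{i-3}\dotsm y_{i-2} \ \text{ divides }\ \beta_i(\lh_1, \dotsc, \lh_{i-1})\ \text{ in }\ \ZZ[y_1, \dotsc, y_{i-1}];
\]
equivalently, if and only if the specialization $\lh_i|_{y_i = 0}$ is a polynomial. Note that $\beta_i$ is a wholly explicit $\ZZ$-linear combination of products of $\lh_1, \dotsc, \lh_{i-1}$, read off from the cofactor expansion of the Toeplitz determinant defining $\mino_i$ along its last column.

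Since $\mino_{i-2}(\LHP)$ is squarefree, this divisibility is equivalent to the family of conditions $y_k^{\,i-1-k} \mid \beta_i(\lh_1, \dotsc, \lh_{i-1})$ for $k = 1, \dotsc, i-2$, so it is enough to control each $\lh_j$ modulo powers of a single variable $y_k$. This is where I would invest the real effort. Reducing the defining equations $(\mathrm{LHS}_j)$ modulo $y_k$ kills the monomial $\mino_j(\LHP)$ for \emph{every} $j \geq k$; consequently the residues $\overline{\lh_1}, \overline{\lh_2}, \dotsc$ in $\ZZ[y_1, \dotsc, y_{i-1}]/(y_k)$ have all of their Toeplitz minors $\mino_j$ vanishing for $j \geq k$, which is a strong rigidity constraint on the generating function $\sum_j \overline{\lh_j}\, t^j$ (the vanishing of a long run of Toeplitz minors of a sequence forces its generating function to be rational of tightly bounded shape). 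Iterating this analysis for the reductions modulo $y_k^2, y_k^3, \dotsc$ -- governed by the $y_k$-adic valuations of the monomials $\mino_j(\LHP)$ -- should yield a description of each $\lh_j$ modulo $y_k^{\,i-1-k}$ explicit enough that substituting it into the cofactor expansion of $\beta_i$ exhibits the required divisibility as a telescoping cancellation, of the kind already visible in the small-case identities $\lh_3|_{y_3 = 0} = \lh_2^2/\lh_1$ and $\lh_4|_{y_4 = 0} = \lh_3^2/\lh_2$ coming from $(\mathrm{LHS}_3)$ and $(\mathrm{LHS}_4)$.

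An alternative route, which would at the same time shed light on Conjecture~\ref{conj:vague}, is to first \emph{guess} a closed combinatorial formula for $\lh_i$ -- the small cases suggest something built from the staircase monomial $y_1^{i}y_2^{i-1}\dotsm y_i$ together with lower-order corrections indexed by subsets of $[i-1]$ -- and then verify the determinantal identities $(\mathrm{LHS}_i)$ directly via the Lindström--Gessel--Viennot lemma, which is well suited to the Toeplitz determinants $\mino_i$. In either approach the genuine obstacle is the control of denominators: the recursion for the $\lh_i$ -- or for the full table of Toeplitz minors of $\LHP$ -- produces only rational functions on its face, and it is precisely the monomial right-hand sides of $(\mathrm{LHS}_i)$ that must be shown to force every denominator to cancel. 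My expectation is that the crux lies in making the ``rigidity modulo $y_k^a$'' of the sequence $\LHP$ precise and uniform in $j$, $k$ and $a$: a bookkeeping problem rather than a single hard identity, but a delicate one.
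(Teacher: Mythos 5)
You should be aware that the statement you are trying to prove is stated in the paper as an open \emph{conjecture}: the author offers no proof at all, only a computational verification for $i \le 12$. So there is no ``paper proof'' to match, and the question is whether your proposal actually closes the problem. It does not. Your reduction is sound as far as it goes: using the linearity of $\mino_i(\PP)$ in $P_i$ with coefficient $\pm\mino_{i-2}(\PP)$ (exactly the observation in the proof of Proposition~\ref{prop:lhp1}), you correctly conclude that $\lh_i$ is linear in $y_i$ with $y_i$-coefficient $\pm y_1^2\dotsm y_{i-1}^2$, and that polynomiality of $\lh_i$ is equivalent to the divisibility $\mino_{i-2}(\LHP) \mid \beta_i(\lh_1,\dotsc,\lh_{i-1})$, i.e.\ to $\lh_i|_{y_i=0}$ being a polynomial. (One small slip: $\mino_{i-2}(\LHP)=\prod_k y_k^{i-1-k}$ is not squarefree; the splitting into conditions $y_k^{i-1-k}\mid\beta_i$ is nevertheless valid because the $y_k^{i-1-k}$ are pairwise coprime.) Your checks $\lh_3|_{y_3=0}=\lh_2^2/\lh_1$ and $\lh_4|_{y_4=0}=\lh_3^2/\lh_2$ are also correct.

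The gap is that this reduction is essentially a restatement of the conjecture, not a proof of it, and the step that would do the actual work is only gestured at. The claimed ``rigidity'' of the sequence of residues $\overline{\lh_j}$ modulo $y_k^a$ --- that the vanishing of a run of Toeplitz minors forces enough structure on $\sum_j \overline{\lh_j}t^j$ to exhibit the divisibility of $\beta_i$ by $y_k^{i-1-k}$ --- is never formulated as a precise lemma, let alone proved, and it is exactly here that the difficulty of the conjecture is concentrated. For $i\ge 5$ the minor defining $\mino_i$ is larger than $2\times 2$, $\beta_i$ is a genuinely multilinear expression in the $\lh_j$, and the two small-case identities you cite give no indication of how the required cancellation propagates; indeed the explicit formulas for $\lh_5,\dotsc,\lh_8$ in Section~\ref{sec:appPoly} show that the structure of $\lh_i|_{y_i=0}$ rapidly stops being a ratio of two earlier $\lh_j$'s. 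Until the ``rigidity modulo $y_k^a$'' statement is made precise and established uniformly in $j$, $k$ and $a$, the proposal proves nothing beyond what Proposition~\ref{prop:lhp1} already gives (that the $\lh_i$ are Laurent polynomials), and the conjecture remains open.
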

We verified this conjecture for $i \leq 12$.
A list of the $\lh_i$ for $i \leq 8$ is given below in Section~\ref{sec:appPoly}.
%
In view of our application to the Lecture Hall cone, We define a $\ZZ^2$-grading on $S_\infty$ by setting
\[ 
\deg y_i := \begin{cases}
	(0,1) &\text{if $i$ is even,}\\
	(1,0) &\text{if $i$ is odd.}\\
\end{cases}
\]
\begin{proposition}\label{prop:lhp2}
	Each Lecture Hall polynomial $\lh_i$ is homogeneous of degree $(i, i-1)$ with respect to the given $\ZZ^2$-grading on $S_\infty$.
\end{proposition}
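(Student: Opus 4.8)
The plan is to prove this by strong induction on $i$, using the recursive structure provided by the equations $(\mathrm{LHS}_i)$. The key observation is that the right-hand side of $(\mathrm{LHS}_i)$, namely $y_1^{i}y_2^{i-1}\dotsm y_{i-1}^2 y_i$, is itself homogeneous with respect to the $\ZZ^2$-grading, and its degree is exactly $(i,i-1)$: among the exponents $i, i-1, \dotsc, 2, 1$ attached to $y_1, \dotsc, y_i$, the odd-indexed variables $y_1, y_3, \dotsc$ contribute to the first coordinate and the even-indexed ones $y_2, y_4, \dotsc$ to the second. A short bookkeeping computation — summing $\sum_{j \text{ odd}, j \le i}(i-j+1)$ and $\sum_{j \text{ even}, j \le i}(i-j+1)$ — shows these two sums are $i$ and $i-1$ respectively, regardless of the parity of $i$.

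Next I would set up the inductive step. Assume $\lh_1, \dotsc, \lh_{i-1}$ are each homogeneous, with $\deg \lh_j = (j, j-1)$. I want to conclude $\deg \lh_i = (i,i-1)$. From the proof of Proposition~\ref{prop:lhp1}, the equation $(\mathrm{LHS}_i)$ expresses the monomial $y_1^{i}y_2^{i-1}\dotsm y_i$ as $\mino_i(\LHP)$, which is linear in $\lh_i$ with coefficient $\pm\mino_{i-2}(\LHP)$ (for $i \ge 3$; the cases $i = 1, 2$ are immediate from the explicit formulas $\lh_1 = y_1$, $\lh_2 = y_1^2 y_2$). So
\[
\lh_i = \pm\bigl(y_1^{i}y_2^{i-1}\dotsm y_i \mp (\text{lower-order terms of } \mino_i(\LHP))\bigr)\big/ \mino_{i-2}(\LHP),
\]
where the "lower-order terms" form the part of the determinant expansion of $\mino_i(\LHP)$ not involving $\lh_i$. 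I would then argue that the numerator is homogeneous of degree $(i,i-1)$ and the denominator $\mino_{i-2}(\LHP)$ is homogeneous of degree $(i-2, i-3)$, so that the quotient is homogeneous of degree $(i,i-1)$ as claimed.

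To make the homogeneity of $\mino_i(\LHP)$ precise, the cleanest route is a lemma: for any sequence $\PP = P_1, P_2, \dotsc$ of $\ZZ^2$-homogeneous elements with $\deg P_j = (j, j-1)$, every minor $\det\sm{S}{T}{M(\PP)}$ with $|S| = |T| = k$ is homogeneous, and its degree depends only on $k$ and on $\sum_{t \in T} t - \sum_{s \in S} s$. Indeed, each entry $M(\PP)_{s,t} = -P_{t-s+1}$ has degree $(t-s+1, t-s)$, and in the Leibniz expansion every term picks up, for the column set $T$ and row set $S$, a total of $\sum_{t\in T}(t - \sigma(t) + 1)$ in the first coordinate for the permutation $\sigma$; since $\sigma$ is a bijection $T \to S$, this sum equals $\sum_{t \in T} t - \sum_{s\in S} s + k$, independent of $\sigma$ — hence all terms share the same bidegree and the determinant is homogeneous (or zero). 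Applying this with $S = [\lceil i/2\rceil]$, $T = \{\lfloor i/2\rfloor, \dotsc, i\}$ gives $k = \lceil i/2\rceil$ and $\sum_{t\in T} t - \sum_{s \in S} s = \bigl(\sum_{j=\lfloor i/2\rfloor}^{i} j\bigr) - \bigl(\sum_{j=1}^{\lceil i/2\rceil} j\bigr)$; plugging into the lemma and matching against the known right-hand side of $(\mathrm{LHS}_i)$ pins down the bidegree of $\mino_i(\LHP)$ to be $(i,i-1)$, and similarly $\mino_{i-2}(\LHP)$ has bidegree $(i-2,i-3)$. The "lower-order terms" of $\mino_i$ not involving $\lh_i$ also have bidegree $(i,i-1)$, since they are a subset of the terms of the full (homogeneous) determinant.

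The main obstacle is bookkeeping rather than conceptual: one must carefully verify that the index-shift identity $j - i + 1$ in the definition of $M(\PP)$ interacts correctly with the $\ZZ^2$-grading so that $\deg P_j = (j, j-1)$ forces $\deg M(\PP)_{i,j} = (j - i + 1, j - i)$, and that this propagates through the Leibniz formula with the permutation sums cancelling as described. Once the homogeneity lemma for minors is in place, the induction closes immediately, because the equation $(\mathrm{LHS}_i)$ equates two homogeneous quantities and we already know $\lh_i$ exists as a Laurent polynomial by Proposition~\ref{prop:lhp1}; a nonzero Laurent-polynomial solution to $\lh_i \cdot (\text{homogeneous of bidegree } (i-2,i-3)) = (\text{homogeneous of bidegree }(i,i-1))$ must itself be homogeneous of bidegree $(i,i-1)$.
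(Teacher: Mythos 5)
Your overall strategy --- induct on $i$, prove a Leibniz-type lemma saying that any minor of $M(\PP)$ formed from bihomogeneous entries with $\deg P_j=(j,j-1)$ is bihomogeneous with bidegree determined only by the row and column index sums, and then extract the degree of $\lh_i$ from $(\mathrm{LHS}_i)$ after dividing by $\mino_{i-2}(\LHP)$ --- is sound, and the paper itself offers no proof (it only calls the statement a straightforward computation), so there is nothing to compare against. However, your degree bookkeeping fails at the decisive step. The monomial $y_1^{i}y_2^{i-1}\dotsm y_i$ is \emph{not} of bidegree $(i,i-1)$ for $i\geq 3$: already for $i=3$ the monomial $y_1^3y_2^2y_3$ has bidegree $(4,2)$, and in general the bidegree is $(m(m+1),m^2)$ for $i=2m$ and $((m+1)^2,m(m+1))$ for $i=2m+1$, i.e.\ quadratic in $i$. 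Consequently $\mino_{i-2}(\LHP)$ is not of bidegree $(i-2,i-3)$ either. Moreover, even granting your claimed values, the division step is internally inconsistent: a quotient of something of bidegree $(i,i-1)$ by something of bidegree $(i-2,i-3)$ would have bidegree $(2,2)$, not $(i,i-1)$, so the conclusion does not follow from the asserted intermediate degrees.

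The repair stays entirely within your framework. Your minor lemma correctly shows that $\mino_i(\LHP)$, and likewise the part $R$ of its Leibniz expansion not involving $\lh_i$ (which uses only $\lh_1,\dotsc,\lh_{i-1}$, so the inductive hypothesis suffices), is bihomogeneous of degree $(D+k,D)$, where $k=\lceil i/2\rceil$ and $D$ is the difference of the column and row index sums; note that the relevant column set is $\set{\lfloor i/2\rfloor+1,\dotsc,i}$ --- the set $\set{\lfloor i/2\rfloor,\dotsc,i}$ you quote from the definition has one element more than the row set, as the paper's own examples for $i=3,4$ show. One then checks that $(D+k,D)$ agrees with the bidegree of $y_1^{i}\dotsm y_i$ recorded above, and that the difference of these bidegrees for the indices $i$ and $i-2$ is exactly $(i,i-1)$. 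Since $\mino_{i-2}(\LHP)=y_1^{i-2}\dotsm y_{i-2}$ is a (nonzero) monomial and $\lh_i\cdot\mino_{i-2}(\LHP)=\pm\bigl(y_1^{i}\dotsm y_i-R\bigr)$ is homogeneous, $\lh_i$ is homogeneous of bidegree $(i,i-1)$, closing the induction. As written, though, the proposal's central degree computations are false, so the proof does not stand without this correction.
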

\begin{proof}
	This is a tedious but straightforward computation. We omit the details.
\end{proof}

The Propositions \ref{prop:lhp1} and \ref{prop:lhp2} together imply the following corollary:
\begin{corollary}\label{cor:hilbert}
	The algebra $A_n := \QQ[\lh_1, \dotsc, \lh_n] \subset \Quot(S_\infty)$ generated by the first $n$ Lecture Hall polynomials is isomorphic to a $\ZZ^2$-graded polynomial ring.
	Its Hilbert series equals
	\[ \prod_{i=1}^n \frac{1}{1-q_1^i q_2^{i-1}} \]
\end{corollary}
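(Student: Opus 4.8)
The statement has two ingredients: that $\lh_1, \dotsc, \lh_n$ are algebraically independent over $\QQ$ (so that $A_n$ really is a polynomial ring), and the computation of the Hilbert series. Once the first is known, the second follows formally from Proposition \ref{prop:lhp2} together with the standard product formula for the Hilbert series of a graded polynomial ring. So the plan is to isolate and prove the algebraic independence, and then assemble everything.

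For the independence I would induct on $n$, exploiting the triangular shape recorded in Proposition \ref{prop:lhp1}(c). By that proposition $\lh_n$ lies in $\QQ(y_1, \dotsc, y_n)$ and is non-constant as a function of $y_n$; clearing denominators in the identity $\lh_n = \lh_n(y_1, \dotsc, y_n)$ exhibits $y_n$ as algebraic over $\QQ(y_1, \dotsc, y_{n-1})(\lh_n)$. Hence $\QQ(y_1, \dotsc, y_n)$ is algebraic over $\QQ(y_1, \dotsc, y_{n-1})(\lh_n)$, so the latter has transcendence degree $n$ over $\QQ$, which forces $\lh_n$ to be transcendental over $\QQ(y_1, \dotsc, y_{n-1})$, and a fortiori over its subfield $\QQ(\lh_1, \dotsc, \lh_{n-1})$. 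By the inductive hypothesis $\lh_1, \dotsc, \lh_{n-1}$ are algebraically independent, and therefore so are $\lh_1, \dotsc, \lh_n$. (Alternatively one could invoke the Jacobian criterion in characteristic zero: by Proposition \ref{prop:lhp1}(c) the matrix $\left(\partial \lh_i/\partial y_j\right)_{1 \leq i,j \leq n}$ is lower triangular, and each diagonal entry $\partial\lh_i/\partial y_i$ is nonzero since $\lh_i$ genuinely involves $y_i$, so the Jacobian determinant does not vanish.) It follows that the $\QQ$-algebra homomorphism $\QQ[t_1, \dotsc, t_n] \to A_n$ sending $t_i \mapsto \lh_i$ is an isomorphism.

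To finish, I would invoke Proposition \ref{prop:lhp2}: each $\lh_i$ is homogeneous of degree $(i, i-1)$ for the $\ZZ^2$-grading on $S_\infty$, which extends to $\Quot(S_\infty)$, so $A_n$, being generated by homogeneous elements, is a graded subalgebra, and the isomorphism above makes $\QQ[t_1, \dotsc, t_n]$ into a polynomial ring with $\deg t_i = (i, i-1)$. Each graded component is finite-dimensional, because a monomial $\lh_1^{e_1}\dotsm\lh_n^{e_n}$ has degree $\left(\sum_i i\,e_i,\ \sum_i (i-1)\,e_i\right)$ and fixing the first coordinate already bounds every $e_i$. The Hilbert series of a polynomial ring on algebraically independent homogeneous generators of degrees $d_1, \dotsc, d_n$ is $\prod_{i=1}^{n}(1-q^{d_i})^{-1}$, and substituting $d_i = (i, i-1)$ gives precisely $\prod_{i=1}^{n}\left(1 - q_1^{\,i} q_2^{\,i-1}\right)^{-1}$. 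The only step here that is not pure bookkeeping is the algebraic independence, and even that is routine once the triangular structure of Proposition \ref{prop:lhp1}(c) is in hand, so I do not expect a genuine obstacle.
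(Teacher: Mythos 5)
Your proposal is correct and follows the same route as the paper: the paper deduces algebraic independence of $\lh_1, \dotsc, \lh_n$ directly from Proposition \ref{prop:lhp1}(c) and then reads off the Hilbert series from Proposition \ref{prop:lhp2}, exactly as you do. Your transcendence-degree (or Jacobian) argument simply spells out the independence step that the paper leaves implicit, so there is no substantive difference.
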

\begin{proof}
	Part (c) of Proposition \ref{prop:lhp1} implies that all $\lh_i$ are algebraically independent, and thus $A_n$ is a polynomial ring.
	The claim for the Hilbert series is then immediate from Proposition \ref{prop:lhp2}.
\end{proof}
Since each $\lh_i$ depends only on $y_1, \dotsc, y_i$ and is a Laurent polynomial, the algebra $A_n$ is actually a subalgebra of $\QQ[y_1^\pm, \dotsc, y_n^\pm]$.
Moreover, if Conjecture \ref{conj:pi} holds, then we even have that $A_n \subseteq \QQ[y_1, \dotsc, y_n]$.

\section{Realizing the Ehrhart ring as an initial subalgebra}\label{sec:sagbi}

Let us return to a general discussion of initial subalgebras.
Let $A \subseteq S := \QQ[x_1, \dots, x_m]$ be a subalgebra.
A finite collection of polynomials $p_1, \dots, p_r \in A$ is called a SAGBI basis (Subalgebra Analogue to Gr\"obner Basis for Ideals) if $\IN_\prec(A)$ is generated by $\IN_\prec(p_1), \dotsc, \IN_\prec(p_r)$ as $\QQ$-algebra.
SAGBI bases were introduced by Robbiano and Sweedler \cite{RS}, and independently by Kapur and Madeler \cite{KM}.
Their theory is in many ways similar to the theory of Gr\"obner bases, with the important difference that not every finitely-generated subalgebra admits a finite SAGBI basis.
We refer the reader to Chapter 11 of \cite{Stu96} and to the short survey by Bravo \cite{bravosurvey} for more information about these concepts.

We now introduce our candidate for a SAGBI basis for the algebra generated by the Lecture Hall polynomials.
\begin{definition}
	For a finite set $S \subseteq \NN, S \neq \emptyset$ let 
	\[ \lh_S := - \det \sm{[\#S]}{S+1}{M(\LHP)}, \]
		where $S+1 := \set{s+1 \with s \in S}$.
	In other words, $\lh_S$ is the negative of the minor of $M(\LHP)$ using $\#S$ many top rows and the columns in $S+1$.
	In addition, we set $\lh_\emptyset := \lh_1$.
\end{definition}
\noindent Note that $\lh_i = \lh_{\set{i-1}}$ for $i \in \NN$, and that $\lh_{\set{\lfloor i/2 \rfloor,\dotsc,i-1}} = \mino_i(\LHP) = y_1^{i}y_2^{i-1}\dotsm y_{i-1}^2y_i$.
Now we can state the precise version of Conjecture \ref{conj:vague}:
\begin{conjecture}\label{conj:sagbi}
	Let $\prec$ be the degree-lexicographic term order on $S_n := \QQ[y_1, \dotsc, y_n]$ with $y_1 \succ y_2 \succ \dotsc \succ y_n$.
	Assume that Conjecture \ref{conj:pi} holds, i.e., that the $\lh_i$ are polynomials. Further, let $A_n =\QQ[\lh_1, \dotsc, \lh_n]$.
	Then:
	\begin{enumerate}[(i)]
		\item The initial subalgebra $\IN_\prec(A_n)$ equals the Ehrhart ring of $\LH$.
		\item For each $S \subseteq [n-1]$, the leading term of $\lh_S$ is a monomial whose exponent vector is a Lecture Hall partition, and the Lecture Hall partitions arising in this way form the Hilbert basis of $\LH$.
		In particular, the set $\set{\lh_S \with S \subseteq [n-1]}$ is a SAGBI basis for $A_n$.
	\end{enumerate}
\end{conjecture}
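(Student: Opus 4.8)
The plan is to prove item~(ii) first — concretely, to compute the leading term $\IN_\prec(\lh_S)$ for every $S\subseteq[n-1]$ and to show that the map $\varphi_n$ sending $S$ to the exponent vector of $\IN_\prec(\lh_S)$ takes its values in $\LH$ and is a bijection onto the Hilbert basis — and then to deduce item~(i) by comparing Hilbert functions. Throughout one uses the standing hypothesis (Conjecture~\ref{conj:pi}) that the $\lh_i$ are polynomials, so that $A_n=\QQ[\lh_1,\dots,\lh_n]\subseteq S_n$ and each $\lh_S$, being a determinant with entries among the $\pm\lh_j$ and zeros, is itself a polynomial in $y_1,\dots,y_{\max S+1}$, on which the term order $\prec$ acts.

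For the leading terms, the starting point is the factorization $M(\LHP)=-\lh_1\,T$, where $T$ is the upper‐unitriangular Toeplitz matrix carrying $g_k:=\lh_{k+1}/\lh_1$ on its $k$th superdiagonal ($g_0=1$); thus $\lh_S$ equals, up to a sign and a power of $y_1=\lh_1$, a minor of $T$, that is, a skew Jacobi--Trudi determinant, a skew Schur polynomial $s_{\lambda/\mu}(g)$ whose shape is read off from $S$. I would expand this minor via the Lindstr\"om--Gessel--Viennot lemma as a sum over families of non-intersecting lattice paths, then substitute the recursively determined $g_k$ and iterate, peeling off one level of the defining recursion of the $\lh_j$ at a time, aiming for a manifestly cancellation-free combinatorial formula for $\lh_S$ — ideally indexed by some class of \enquote{lecture-hall tableaux}. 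From such a formula the leading term under $\prec$ is simply the largest monomial that occurs, and one should be able to read off directly that its exponent vector satisfies the Lecture Hall inequalities. The base case $\IN_\prec(\lh_i)=y_1^iy_2^{i-1}$, consistent with the values computed above, should emerge from the same analysis.

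I expect this to be the main obstacle. The naive recipe — replace each $\lh_j$ by its leading term and expand the determinant — fails because of genuine cancellation: already $\lh_{\{1,3\}}=\lh_1\lh_4-\lh_2\lh_3$ loses its a priori leading term $y_1^5y_2^3$, the true one being contributed by lower-order parts of the $\lh_j$. The Lindstr\"om--Gessel--Viennot model is the natural way to expose the surviving terms, but the \enquote{letters} $g_k$ are themselves nontrivial polynomials rather than indeterminates, and $s_{\lambda/\mu}(g)$ is not monomial-positive in the $g_k$; the positivity one ultimately needs — in the variables $y_i$ — materializes only once the recursion of the $\lh_j$ has been fully unwound, and organizing this uniformly in $n$ and $S$ is the crux. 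A successful such analysis would, as a byproduct, reprove Proposition~\ref{prop:lhp1}(b) and is the natural place to settle Conjecture~\ref{conj:pi}.

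Granting the explicit description of $\varphi_n$, the remaining steps are comparatively soft. First one checks that $\varphi_n$ is injective with image precisely the Hilbert basis of $\LH$, by matching it against the Hilbert basis exhibited in \cite[Theorem~5.3]{BBKSZ}: injectivity and indecomposability of each $\varphi_n(S)$ in $\LH$ should be visible from the formula, and surjectivity follows from an explicit matching or, failing that, a cardinality count once the Hilbert basis is known to have $2^{n-1}$ elements. Then $\IN_\prec(A_n)\supseteq\QQ[\IN_\prec(\lh_S):S\subseteq[n-1]]=\QQ[y^{\varphi_n(S)}:S\subseteq[n-1]]=E$, the Ehrhart ring of $\LH$, since the Ehrhart ring of a cone is generated by the monomials indexed by a Hilbert basis. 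On the other hand, by Corollary~\ref{cor:hilbert} and \cite[Proposition~2.4]{CHV} the Hilbert series of $\IN_\prec(A_n)$ equals $\prod_{i=1}^n(1-q_1^iq_2^{i-1})^{-1}$, which by Theorem~\ref{thm:lht2} is also the Hilbert series of $E$. A $\ZZ^2$-graded subalgebra sharing the degreewise finite Hilbert function of a graded algebra containing it must coincide with it, so $E=\IN_\prec(A_n)$; this is~(i), and the chain of equalities above then says exactly that $\{\lh_S:S\subseteq[n-1]\}$ is a SAGBI basis, which is~(ii). Should one wish to avoid invoking Theorem~\ref{thm:lht2} — and so reprove it — one would replace this last step by a direct verification of the SAGBI criterion, lifting a generating set of the toric ideal of $E$ to relations among the $\lh_S$ and subducting them to zero; this is more informative but both harder and dependent on understanding the defining relations of the Ehrhart ring of $\LH$.
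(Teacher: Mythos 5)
This statement is a conjecture, and the paper offers no proof of it: all it contains is the reduction in Remark~\ref{rem:conj}(c) and the computational verification for $n\leq 12$ reported as Theorem~\ref{thm:evidence}. Your proposal does not close that gap. Its ``soft'' half is essentially the paper's own Remark~\ref{rem:conj}(c): if for every $S\subseteq[n-1]$ the exponent vector of $\IN_\prec(\lh_S)$ is of the Hilbert-basis form \eqref{eq:hb} and distinct sets give distinct exponent vectors, then a cardinality count identifies the image with $H_n$, the Ehrhart ring is contained in $\IN_\prec(A_n)$, and equality follows by comparing Hilbert series via Corollary~\ref{cor:hilbert}, \cite[Proposition~2.4]{CHV} and Theorem~\ref{thm:lht2}. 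That part of your argument is fine (and, as you note, it makes the argument depend on the Lecture Hall Theorem rather than reprove it), but it is conditional on the hard half, which you only sketch.

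The hard half --- a cancellation-free description of $\lh_S$ from which $\IN_\prec(\lh_S)$ can be read off uniformly in $n$ and $S$ --- is exactly the open content of the conjecture, and your plan for it is not yet an argument. Writing $M(\LHP)=-\lh_1 T$ with $T$ unitriangular Toeplitz in $g_k:=\lh_{k+1}/\lh_1$ makes $\lh_S$ a Jacobi--Trudi-type determinant only formally: the $g_k$ are not known to be complete homogeneous functions of any alphabet, so no Lindstr\"om--Gessel--Viennot path model is available without further input, and the determinant in the $g_k$ has genuine cancellation (your own example $\lh_{\set{1,3}}=\lh_1\lh_4-\lh_2\lh_3$ shows the naive leading terms cancel). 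Moreover the explicit data in Section~\ref{sec:appPoly} (e.g.\ the factors $(y_3-y_5)^k$ in $\lh_5,\lh_6,\dotsc$) show the $\lh_i$ themselves are not monomial-positive, so even the hoped-for ``lecture-hall tableaux'' expansion would have to handle signs; it is not established that such a formula exists, let alone that its $\prec$-largest monomial has exponent vector of the form \eqref{eq:hb} and that these vectors are pairwise distinct. Until that step is actually carried out (which would also settle Conjecture~\ref{conj:pi} and Proposition~\ref{prop:lhp1}(b) along the way, as you say), the proposal remains a strategy consistent with, but not an advance on, what the paper already records.
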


\begin{remark}\label{rem:conj}
\begin{enumerate}[(a)]
	\item Part (i) of Conjecture \ref{conj:sagbi} implies the Lecture Hall Theorem.
	\item Our conjectured SAGBI basis has $2^{n-1}$ elements, and this is also the cardinality of the Hilbert basis $H_n$ of $\LH$.
	Indeed, by \cite[Theorem 5.3]{BBKSZ}, the Hilbert basis $H_n$ consists of the vectors of the form
	\begin{equation}\label{eq:hb}
		(a_1, a_2, \dotsc,  a_{i-1}, a_{i}, 0, \dotsc, 0)
	\end{equation}
	with $a_1,\dotsc a_{i} \in \ZZ$, $a_2 > a_3 > \dotsb > a_i > 0$ and $a_1 = a_{2} + 1$.
	Omitting the first coordinate yields a bijection from $H_n$ to $2^{[n-1]}$, and thus $\#H_n = 2^{n-1}$.

	\item Part (ii) of Conjecture \ref{conj:sagbi} alone implies that the Ehrhart ring of $\LH$ is contained in $\IN_\prec(A_n)$.
	Hence part (ii), Corollary \ref{cor:hilbert} and Lecture Hall Theorem together imply part (i).
	Moreover, since the cardinality of $H_n$ and of $\set{\lh_S \with S \subseteq [n-1]}$ are the same, Conjecture \ref{conj:sagbi} follows once one can show the following:
	\begin{enumerate}[(i)]
		\item For each $S \subseteq [n-1]$, the exponent vector of the leading term of $\lh_S$ is of the form \eqref{eq:hb}, and
		\item for any two distinct sets $S, S' \subseteq [n-1], S \neq S'$, the exponent vectors of the leading terms of $\lh_S$ and $\lh_{S'}$ are different.
	\end{enumerate}
	On the other hand, it seems desirable to find a proof of Conjecture \ref{conj:sagbi} which does not rely on the Lecture Hall Theorem.
\end{enumerate}
\end{remark}

\noindent Let us verify the conjecture in two small cases.

\begin{example}
	The cases $n = 1$ and $n = 2$ are rather trivial, so we consider the case $n = 3$.
	We have that $\IN_\prec(\lh_\emptyset) = \IN_\prec(\lh_1) = y_1$, $\IN_\prec(\lh_{\set{1}}) = \IN_\prec(\lh_2) = y_1^2 y_2$ and $\IN_\prec(\lh_{\set{2}}) = \IN_\prec(\lh_3) = y_1^3 y_2^2$.
	Moreover, by definition we have that $\lh_{1,2} = - \mino_3(\LHP) = y_1^3 y_2^2 y_3$.
	Hence the criterion of part (c) of Remark \ref{rem:conj} is satisfied.
\end{example}

\begin{example}
	Next we consider the case $n = 4$.
	In addition to the computations above, we have that $\IN_\prec(\lh_{\set{3}}) = \IN_\prec(\lh_4) = y_1^4 y_2^3$ and $\lh_{\set{2,3}} = - \mino_4(\LHP) = y_1^4 y_2^3 y_3^2 y_4$.
	For the remaining two polynomials we compute that
	\begin{dmath*}
		\lh_{\set{1,3}} = - \det\begin{pmatrix} -\lh_2 & - \lh_ 4 \\ - \lh_1 & -\lh_3 \end{pmatrix} = \lh_1 \lh_4 - \lh_2 \lh_3 = y_1^3 y_2^3 (y_1 + y_3)^2 + y_1^3 y_2^2 y_3^2 y_4 - y_1^4 y_2^3 (y_1 + y_3) = y_1^4 y_2^3 y_3 + y_1^3 y_2^3 y_3^2 + y_1^3 y_2^2 y_3^2 y_4,
	\end{dmath*}
	and thus $\IN_\prec(\lh_{\set{1,3}}) = y_1^4 y_2^3 y_3$.
	Moreover, 
	\begin{dmath*}
		\lh_{\set{1,2,3}} = - \det\begin{pmatrix} -\lh_2 & -\lh_3 & - \lh_ 4 \\ - \lh_1 & -\lh_2 & - \lh_3 \\ 0 & - \lh_1 & -\lh_2 \end{pmatrix} 
		= \lh_2 \det\begin{pmatrix} -\lh_2 & - \lh_ 3 \\ - \lh_1 & -\lh_2 \end{pmatrix} 
			- \lh_1 \det\begin{pmatrix} -\lh_3 & - \lh_ 4 \\ - \lh_1 & - \lh_2 \end{pmatrix}
		= - \lh_2 y_1^3y_2^2y_3 + \lh_1( y_1^4 y_2^3 y_3 + y_1^3 y_2^3 y_3^2 + y_1^3 y_2^2 y_3^2 y_4) = y_1^4 y_2^3 y_3^2 + y_1^4 y_2^2 y_3^2 y_4
	\end{dmath*}
	which implies that $\IN_\prec(\lh_{\set{1,2,3}}) = y_1^4 y_2^3 y_3^2$.
	In conclusion, the criterion of part (c) of Remark \ref{rem:conj} is satisfied.
\end{example}

\noindent As in these examples, the items in part (c) of Remark \ref{rem:conj} can be verified computationally for small $n$.
Using \texttt{Maple}, we found the following computational evidence for Conjecture \ref{conj:sagbi}:
\begin{theorem}\label{thm:evidence}
	Conjecture \ref{conj:sagbi} holds for $n \leq 12$.
\end{theorem}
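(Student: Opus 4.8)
The plan is to verify Conjecture \ref{conj:sagbi} for each fixed $n \leq 12$ by a finite computation, following exactly the reduction outlined in part (c) of Remark \ref{rem:conj}. Since we may assume the Lecture Hall Theorem (Theorem \ref{thm:lht2}) and Corollary \ref{cor:hilbert}, it suffices to establish the two combinatorial statements (i) and (ii) from that remark: that for each $S \subseteq [n-1]$ the exponent vector of $\IN_\prec(\lh_S)$ has the form \eqref{eq:hb}, and that the map $S \mapsto \IN_\prec(\lh_S)$ is injective. Both of these are verifiable by explicit symbolic computation once the $\lh_i$ are known as honest polynomials, which in turn is Conjecture \ref{conj:pi} — so the computation must also confirm, as a by-product, that $\lh_1, \dotsc, \lh_n$ are polynomials for $n \leq 12$.

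Concretely, I would proceed as follows. First, for each $n \in \set{1, \dotsc, 12}$, recursively solve the equations $(\mathrm{LHS}_1), \dotsc, (\mathrm{LHS}_n)$ in \texttt{Maple} to obtain $\lh_1, \dotsc, \lh_n$ as elements of $\QQ[y_1^{\pm}, \dotsc, y_n^{\pm}]$, using that each $(\mathrm{LHS}_i)$ is linear in $\lh_i$ with monomial leading coefficient $\mino_{i-2}(\LHP)$ (Proposition \ref{prop:lhp1}), and check that no negative exponents appear. Second, for each of the $2^{n-1}$ subsets $S \subseteq [n-1]$, form the matrix $\sm{[\#S]}{S+1}{M(\LHP)}$, expand the determinant to get $\lh_S$, collect terms, and extract the leading monomial $\IN_\prec(\lh_S)$ under the degree-lexicographic order with $y_1 \succ \dotsb \succ y_n$. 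Third, read off the exponent vector $(a_1, \dotsc, a_n)$ of $\IN_\prec(\lh_S)$ and check that it satisfies the defining inequalities of \eqref{eq:hb}, namely that there is some $i$ with $a_{i+1} = \dotsb = a_n = 0$, $a_2 > a_3 > \dotsb > a_i > 0$, and $a_1 = a_2 + 1$ (with the appropriate degenerate reading when $S = \emptyset$, giving $\lh_1 = y_1$). Fourth, store all $2^{n-1}$ resulting exponent vectors and confirm they are pairwise distinct. By Remark \ref{rem:conj}(c), steps three and four together with the cardinality count $\#H_n = 2^{n-1}$ and the Lecture Hall Theorem imply Conjecture \ref{conj:sagbi} for that value of $n$.

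The main obstacle is purely computational: the polynomials $\lh_i$ grow rapidly in the number of monomials as $i$ increases, and already the determinantal expansion of $\lh_S$ for $\#S$ close to $n-1$ involves sizeable intermediate expressions. Thus the practical ceiling $n = 12$ is dictated by memory and time rather than by any conceptual difficulty, and the computation must be organized so that leading terms are tracked without fully expanding cancelling lower-order terms wherever possible — for instance by exploiting the homogeneity from Proposition \ref{prop:lhp2} to restrict attention to the relevant bidegree $(\lvert S \rvert + \text{(appropriate shift)}, \dotsc)$ and by using the recursive structure of $M(\LHP)$ to build the minors incrementally. A secondary, more structural point worth noting is that while the injectivity check in step four passes in every instance, we have no a priori argument forcing it; the natural guess, matching Remark \ref{rem:conj}(b), is that the exponent vector of $\IN_\prec(\lh_S)$ is precisely the element of $H_n$ whose coordinates $(a_2, \dotsc, a_n)$ encode the set $S$, but pinning down this correspondence uniformly for all $n$ is exactly the independent problem discussed around the map $\varphi_n$ in Section \ref{sec:phi}, and lies beyond the scope of the present finite verification.
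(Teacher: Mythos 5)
Your proposal is correct and matches the paper's approach: the theorem is established precisely by a symbolic (\texttt{Maple}) computation that first verifies Conjecture \ref{conj:pi} for $i \leq 12$ and then checks the two conditions of Remark \ref{rem:conj}(c) — leading-term exponent vectors of the $\lh_S$ of the form \eqref{eq:hb} and pairwise distinct — from which Conjecture \ref{conj:sagbi} follows via the cardinality count, Corollary \ref{cor:hilbert}, and the Lecture Hall Theorem. The paper gives no further argument beyond this finite verification, so your outline is essentially its proof.
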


\begin{remark}
	In all examples we computed so far, the leading term of $\lh_S$ has coefficient $1$.
	This is the reason for our choice of signs in the definition of the Lecture Hall polynomials.
\end{remark}

\begin{remark}
	Conjecture \ref{conj:sagbi} implies a particular description of the toric ideal associated to $\LH$.
	Consider the polynomial ring $R_n := \QQ[x_S \with S \subseteq [n-1]]$ and let $I_n \subseteq R_n$ be the kernel of the natural map $R_n \to A_n, x_S \mapsto \lh_S$.
	Since the $\lh_i$ are algebraically independent for $i \geq 1$, it is not difficult to see that $I_n$ is generated by the defining relations of the $\lh_S$, i.e., by equations of the form 
	$x_S + \det \sm{[\#S]}{S+1}{M(x_\emptyset, x_{\set{1}}, \dotsc)}$.
	
	If Conjecture \ref{conj:sagbi} is true, then by Theorem 11.4 of \cite{Stu96} the defining ideal of $\IN_\prec(A_n)$, (i.e., the toric ideal of $\LHP$) is an initial ideal of $I_n$ with respect to a particular non-generic weight order. 
	Therefore one can compute the toric ideal by computing a Gr\"obner basis of $I_n$ with respect to that term order.
	
	We computed this Gr\"obner basis using \texttt{Macaulay2} for $n \leq 7$, and, as expected from Theorem \ref{thm:evidence}, its initial forms generate the toric ideal.
	We would like to point out that $I_7$ has $2^{7-1} - 7 = 53$ generators, while the toric ideal and the Gr\"obner basis have $1351$ generators each.
	Therefore, these considerations might be helpful in understanding the toric ideals of $\LH$.
\end{remark}

\section{A certain bijection on finite sets}\label{sec:phi}

For a subset $S \subseteq [n-1]$ let $\varphi_n(S)$ be the exponent vector of $\IN_\prec(\lh_S)$.
If Conjecture \ref{conj:sagbi} is true, then $\varphi_n(S)$ a bijection between $2^{[n-1]}$ and the Hilbert basis $H_n$ of $\LH$.
As mentioned above in Remark \ref{rem:conj}, $H_n$ can itself be identified with the power set of $[n-1]$, so $\varphi_n$ can also be considered as a permutation of this set.

\begin{problem}
	Find a combinatorial description of $\varphi_n$.
\end{problem}
A table of the values $\varphi_n$ for $n \leq 8$ is provided in Table \ref{tab:phi} in Section \ref{sec:appPoly}.
We caution the reader that because Conjecture \ref{conj:sagbi} is just a conjecture, a priory the map $\varphi_n$ might fail to be injective or might fail to take values in $H_n$ for large $n$, even though we do not expect this to happen.
We close this note with a few properties of $\varphi_n$.

\begin{proposition}
	Assume that Conjecture \ref{conj:sagbi} is true.
	Then the following holds for $n \in \NN$:
	\begin{enumerate}[(a)]
		\item For each $i \leq n$, it holds that $\varphi_n(2^{[i-1]}) = H_{i}$.
		\item For $S \subseteq [n-1], S\neq \emptyset$, the alternating sum of the entries of $\varphi_n(S)$ equals $\#S$.
		\item For $S \subseteq [n-1], S\neq \emptyset$, the sum of the entries of $\varphi_n(S)$ equals $2\sum_{s \in S}s + 2\#S - \#S^2$.
	\end{enumerate}
\end{proposition}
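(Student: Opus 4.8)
The plan is to handle part (a) by a support-and-counting argument, and parts (b) and (c) simultaneously as the two coordinates of a single $\ZZ^2$-degree computation.

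For part (a), I would first note that for $S \subseteq [i-1]$ the Laurent polynomial $\lh_S$ involves only $\lh_1, \dots, \lh_i$: writing $S = \{s_1 < \dots < s_m\}$ with $m = \#S$, the entry of $\sm{[m]}{S+1}{M(\LHP)}$ in row $a$ and the column indexed by $s_k + 1$ is $-\lh_{s_k - a + 2}$ (or $0$), and $s_k - a + 2 \le (i-1) - 1 + 2 = i$. By Proposition \ref{prop:lhp1}(c) together with Conjecture \ref{conj:pi} (part of the standing hypothesis), $\lh_1, \dots, \lh_i \in S_i = \QQ[y_1, \dots, y_i]$, hence $\lh_S \in S_i$, and so $\varphi_n(S)$ --- the exponent vector of $\IN_\prec(\lh_S)$ --- is supported on $[i]$. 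Reading off the description \eqref{eq:hb} of the Hilbert basis, the vectors of $H_n$ supported on $[i]$ are precisely those belonging to the natural copy of $H_i$, so $\varphi_n(2^{[i-1]}) \subseteq H_i$. Since $\varphi_n \colon 2^{[n-1]} \to H_n$ is a bijection by Conjecture \ref{conj:sagbi}, its restriction to $2^{[i-1]}$ is injective; as $\#2^{[i-1]} = 2^{i-1} = \#H_i$ by Remark \ref{rem:conj}(b), the inclusion is an equality. (One gets $\varphi_n(S) = \varphi_i(S)$ for $S \subseteq [i-1]$ for free.)

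For parts (b) and (c), the observation is that $\lh_S$ is $\ZZ^2$-homogeneous and both quantities can be read off its bidegree. For a monomial $y^\lambda$, $\lambda = (\lambda_1, \dots, \lambda_n)$, the bidegree is $(|\lambda|_o, |\lambda|_e)$, so the alternating sum of the entries of $\lambda$ is the difference of the two components and the ordinary sum is their sum. Since the $\prec$-leading term of a homogeneous polynomial is homogeneous of the same bidegree, it is enough to compute $\deg \lh_S$. By Proposition \ref{prop:lhp2} the entry $-\lh_{s_k - a + 2}$ of $\sm{[m]}{S+1}{M(\LHP)}$ has bidegree $(s_k - a + 2,\ s_k - a + 1) = (-a,-a) + (s_k + 2,\ s_k + 1)$, a sum of a term depending only on the row index $a$ and a term depending only on the column index $k$; hence every term in the Leibniz expansion of the determinant has the same bidegree
\[
\sum_{a=1}^m (-a,-a) + \sum_{k=1}^m (s_k + 2,\, s_k + 1)
= \Bigl( {\textstyle\sum_{s\in S} s} + 2m - {\textstyle\binom{m+1}{2}},\ {\textstyle\sum_{s\in S} s} + m - {\textstyle\binom{m+1}{2}} \Bigr),
\]
and therefore $\lh_S$, which is nonzero by Conjecture \ref{conj:sagbi}(ii), is homogeneous of this bidegree. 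The difference of the two components is $m = \#S$, giving (b); their sum is $2\sum_{s\in S} s + 3m - m(m+1) = 2\sum_{s\in S} s + 2\#S - \#S^2$, giving (c).

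I do not expect a genuine obstacle: the bidegree computation is unconditional, and Conjecture \ref{conj:sagbi} is invoked only to know that $\lh_S \neq 0$ (so that $\IN_\prec(\lh_S)$ is defined) and that $\varphi_n$ is an honest bijection. The one point requiring care is the index bookkeeping for the minor $\sm{[m]}{S+1}{M(\LHP)}$, including the harmless role of the zero entries above the staircase of $M(\LHP)$ --- a zero may be assigned the same separated bidegree as the other entries, so it does not disturb the argument.
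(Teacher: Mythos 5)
Your proposal is correct. For parts (b) and (c) it is essentially the paper's argument: the paper also reads both quantities off the $\ZZ^2$-degree of $\lh_S$, computing it from the minor defining $\lh_S$ (the paper picks out the generalized diagonal summand $\prod_{i}\lh_{s_i+2-i}$, implicitly using that all Leibniz terms share one bidegree, which your row/column-separated bookkeeping makes explicit), and the resulting degree and final arithmetic agree with yours. For part (a) you take a mildly different route: the paper simply observes that $\varphi_n$ restricted to $2^{[i-1]}$ equals $\varphi_i$ and then invokes Conjecture \ref{conj:sagbi} at level $i$, whereas you invoke the conjecture only at level $n$ and combine the support bound $\lh_S\in\QQ[y_1^{\pm},\dotsc,y_i^{\pm}]$ with the explicit description \eqref{eq:hb} of the Hilbert basis and the count $\#H_i=2^{i-1}$; both arguments are valid, the paper's being shorter while yours trades the use of the conjecture for smaller $i$ against an extra appeal to the structure of $H_n$ from \cite{BBKSZ}.
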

\begin{proof}
	It follows from the construction that the restriction of $\varphi_n$ to $2^{[i-1]}$ equals $\varphi_i$, and thus Conjecture \ref{conj:sagbi} implies the first claim.
	
	The second and third claim follow from considering the $\ZZ^2$-grading on $S_n$.
	Let $r := \#S$ and $S = \set{s_1, \dotsc, s_r}$ with $s_1 < s_2 < \dotsb < s_R$.
	Expanding the minor from the definition of $\lh_S$, one sees that one of the summands is $\prod_{i=1}^r \lh_{s_i+2-i}$, and hence $\deg \lh_S = \deg \prod_{i=1}^r \lh_{s_i+2-i}$.
	The sum and the difference between the two components of the degree correspond to the sum and the alternating sum of the exponent vector of $\IN_\prec(\lh_S)$.
	From this the claimed formulas follow from a straight-forward computation.
\end{proof}


\section{Some computational data}
\label{sec:appPoly}
\noindent Here is a list of the Lecture Hall polynomials $\lh_i$ for $i =1, \dotsc, 8$.
\begin{dgroup*}
\begin{dmath*}
\lh_1 = y_1
\end{dmath*}

\begin{dmath*}
\lh_2 = y_1^2 y_2
\end{dmath*}

\begin{dmath*}
\lh_3 = y_1^2 y_2^2(y_1 + y_3)
\end{dmath*}

\begin{dmath*}
\lh_4 = y_1^2 y_2^3(y_1 + y_3)^2 + y_1^2 y_2^2 y_3^2 y_4
\end{dmath*}

\begin{dmath*}
\lh_5 = y_1^2 y_2^4(y_1 + y_3)^3
	+ 2 y_1^2 y_2^3 y_3^2 y_4 (y_1 + y_3)
	+ y_1^2 y_2^2 y_3^2 y_4^2 (y_3 - y_5)
\end{dmath*}

\begin{dmath*}
\lh_6 = y_1^2 y_2^5(y_1 + y_3)^4
	+ 3 y_1^2 y_2^4 y_3^2 y_4 (y_1 + y_3)^2
	+ y_1^2 y_2^2 y_3^2 y_4^3 (y_3 - y_5)^2
	+ y_1^2 y_2^2 y_3^2 y_4^2 \left(2 y_1 y_2 y_3 + 3 y_2 y_3^2  - 2 y_1 y_2 y_5 - 2 y_2 y_3 y_5 - y_5^2 y_6\right)
\end{dmath*}

\begin{dmath*}
\lh_7 = y_1^2 y_2^6 (y_1 + y_3)^5
	+ 4 y_1^2 y_2^5 y_3^2 y_4 (y_1 + y_3)^3
	+ y_1^2 y_2^2 y_3^2 y_4^4 (y_3 - y_5)^3
	+ y_1^2 y_2^2 y_3^2 y_4^2 \left(3 y_1^2 y_2^2 y_3 - 3 y_1^2 y_2^2 y_5 + 9 y_1 y_2^2 y_3^2 - 6 y_1 y_2^2 y_3 y_5 + 2 y_1 y_2 y_3^2 y_4 - 4 y_1 y_2 y_3 y_4 y_5 + 2 y_1 y_2 y_4 y_5^2 - 2 y_1 y_2 y_5^2 y_6 + 6 y_2^2 y_3^3 - 3 y_2^2 y_3^2 y_5 + 4 y_2 y_3^3 y_4 - 6 y_2 y_3^2 y_4 y_5 + 2 y_2 y_3 y_4 y_5^2 - 2 y_2 y_3 y_5^2 y_6 - 2 y_3 y_4 y_5^2 y_6 + 2 y_4 y_5^3 y_6 - y_5^3 y_6^2 + y_5^2 y_6^2 y_7\right)
\end{dmath*}

\begin{dmath*}
\lh_8 = y_1^2 y_2^7 (y_1 + y_3)^6 
	+ 5 y_1^2 y_2^6 y_3^2 y_4 (y_1 + y_3)^4 
	+ y_1^2 y_2^2 y_3^2 y_4^5 (y_3 - y_5)^4 \\
	+ y_1^2 y_2^2 y_3^2 y_4^2\left(4 y_1^3 y_2^3 y_3 - 4 y_1^3 y_2^3 y_5 + 18 y_1^2 y_2^3 y_3^2 - 12 y_1^2 y_2^3 y_3 y_5 + 3 y_1^2 y_2^2 y_3^2 y_4 - 6 y_1^2 y_2^2 y_3 y_4 y_5 + 3 y_1^2 y_2^2 y_4 y_5^2 - 3 y_1^2 y_2^2 y_5^2 y_6 + 24 y_1 y_2^3 y_3^3 - 12 y_1 y_2^3 y_3^2 y_5 + 12 y_1 y_2^2 y_3^3 y_4 - 18 y_1 y_2^2 y_3^2 y_4 y_5 + 6 y_1 y_2^2 y_3 y_4 y_5^2 - 6 y_1 y_2^2 y_3 y_5^2 y_6 + 2 y_1 y_2 y_3^3 y_4^2 - 6 y_1 y_2 y_3^2 y_4^2 y_5 + 6 y_1 y_2 y_3 y_4^2 y_5^2 - 4 y_1 y_2 y_3 y_4 y_5^2 y_6 - 2 y_1 y_2 y_4^2 y_5^3 + 4 y_1 y_2 y_4 y_5^3 y_6 - 2 y_1 y_2 y_5^3 y_6^2 + 2 y_1 y_2 y_5^2 y_6^2 y_7 + 10 y_2^3 y_3^4 - 4 y_2^3 y_3^3 y_5 + 10 y_2^2 y_3^4 y_4 - 12 y_2^2 y_3^3 y_4 y_5 + 3 y_2^2 y_3^2 y_4 y_5^2 - 3 y_2^2 y_3^2 y_5^2 y_6 + 5 y_2 y_3^4 y_4^2 - 12 y_2 y_3^3 y_4^2 y_5 + 9 y_2 y_3^2 y_4^2 y_5^2 - 6 y_2 y_3^2 y_4 y_5^2 y_6 - 2 y_2 y_3 y_4^2 y_5^3 + 4 y_2 y_3 y_4 y_5^3 y_6 - 2 y_2 y_3 y_5^3 y_6^2 + 2 y_2 y_3 y_5^2 y_6^2 y_7 - 3 y_3^2 y_4^2 y_5^2 y_6 + 6 y_3 y_4^2 y_5^3 y_6 - 2 y_3 y_4 y_5^3 y_6^2 + 2 y_3 y_4 y_5^2 y_6^2 y_7 - 3 y_4^2 y_5^4 y_6 + 3 y_4 y_5^4 y_6^2 - 2 y_4 y_5^3 y_6^2 y_7 - y_5^4 y_6^3 + 2 y_5^3 y_6^3 y_7 - y_5^2 y_6^3 y_7^2 + y_5^2 y_6^2 y_7^2 y_8\right)
\end{dmath*}

\end{dgroup*}

\newcommand{\dist}{1em}
\begin{landscape}
\small \footnotesize 
\begin{table}[t]
		\begin{tabular}{r | l}
			$S$ 				 & $\varphi_n(S)$       \\ \hline\hline
			$\emptyset$ 		 & $(1)$               \\ \hline
			$\set{1}$			 &	 $(2, 1)$ \\ \hline
			$\set{2}$			 &	 $(3, 2)$ \\ \hline
			$\set{1, 2}$		 &	 $(3, 2, 1)$ \\ \hline
			$\set{3}$	 		 &	 $(4, 3)$ \\ \hline
			$\set{1, 3}$		 &	 $(4, 3, 1)$ \\ \hline
			$\set{2, 3}$		 &	 $(4, 3, 2, 1)$ \\ \hline
			$\set{1, 2, 3}$		 &	 $(4, 3, 2)$ \\ \hline
			$\set{4}$			 &	 $(5, 4)$ \\ \hline
			$\set{1, 4}$		 &	 $(5, 4, 1)$ \\ \hline
			$\set{2, 4}$		 &	 $(5, 4, 2, 1)$ \\ \hline
			$\set{1, 2, 4}$		 &	 $(5, 4, 2)$ \\ \hline
			$\set{3, 4}$		 &	 $(5, 4, 3, 2)$ \\ \hline
			$\set{1, 3, 4}$		 &	 $(5, 4, 3, 1)$ \\ \hline
			$\set{2, 3, 4}$		 &	 $(5, 4, 3, 2, 1)$ \\ \hline
			$\set{1, 2, 3, 4}$	 &	 $(5, 4, 3)$ \\ \hline
			$\set{5}$ 			 &	 $(6, 5)$ \\ \hline
			$\set{1, 5}$		 &	 $(6, 5, 1)$ \\ \hline
			$\set{2, 5}$		 &	 $(6, 5, 2, 1)$ \\ \hline
			$\set{1, 2, 5}$		 &	 $(6, 5, 2)$ \\ \hline
			$\set{3, 5}$		 &	 $(6, 5, 3, 2)$ \\ \hline
			$\set{1, 3, 5}$		 &	 $(6, 5, 3, 1)$ \\ \hline
			$\set{2, 3, 5}$		 &	 $(6, 5, 3, 2, 1)$ \\ \hline
			$\set{1, 2, 3, 5}$	 &	 $(6, 5, 3)$ \\ \hline
			$\set{4, 5}$		 &	 $(6, 5, 4, 3)$ \\ \hline
			$\set{1, 4, 5}$		 &	 $(6, 5, 4, 2)$ \\ \hline
			$\set{2, 4, 5}$		 &	 $(6, 5, 4, 3, 1)$ \\ \hline
			$\set{1, 2, 4, 5}$	 &	 $(6, 5, 4, 1)$ \\ \hline
			$\set{3, 4, 5}$		 &	 $(6, 5, 4, 3, 2, 1)$ \\ \hline
			$\set{1, 3, 4, 5}$	 &	 $(6, 5, 4, 2, 1)$ \\ \hline
			$\set{2, 3, 4, 5}$	 &	 $(6, 5, 4, 3, 2)$ \\ \hline
			$\set{1, 2, 3, 4, 5}$	 &	 $(6, 5, 4)$ \\ \hline
		\end{tabular}\hspace{\dist}
		\begin{tabular}{r | l}
			$S$					 & $\varphi_n(S)$          \\
			\hline\hline
			$\set{6}$			 &	 $(7, 6)$ \\ \hline
			$\set{1, 6}$		 &	 $(7, 6, 1)$ \\ \hline
			$\set{2, 6}$		 &	 $(7, 6, 2, 1)$ \\ \hline
			$\set{1, 2, 6}$		 &	 $(7, 6, 2)$ \\ \hline
			$\set{3, 6}$		 &	 $(7, 6, 3, 2)$ \\ \hline
			$\set{1, 3, 6}$		 &	 $(7, 6, 3, 1)$ \\ \hline
			$\set{2, 3, 6}$		 &	 $(7, 6, 3, 2, 1)$ \\ \hline
			$\set{1, 2, 3, 6}$	 &	 $(7, 6, 3)$ \\ \hline
			$\set{4, 6}$		 &	 $(7, 6, 4, 3)$ \\ \hline
			$\set{1, 4, 6}$		 &	 $(7, 6, 4, 2)$ \\ \hline
			$\set{2, 4, 6}$		 &	 $(7, 6, 4, 3, 1)$ \\ \hline
			$\set{1, 2, 4, 6}$	 &	 $(7, 6, 4, 1)$ \\ \hline
			$\set{3, 4, 6}$		 &	 $(7, 6, 4, 3, 2, 1)$ \\ \hline
			$\set{1, 3, 4, 6}$	 &	 $(7, 6, 4, 2, 1)$ \\ \hline
			$\set{2, 3, 4, 6}$	 &	 $(7, 6, 4, 3, 2)$ \\ \hline
			$\set{1, 2, 3, 4, 6}$	 &	 $(7, 6, 4)$ \\ \hline
			$\set{5, 6}$		 &	 $(7, 6, 5, 4)$ \\ \hline
			$\set{1, 5, 6}$		 &	 $(7, 6, 5, 3)$ \\ \hline
			$\set{2, 5, 6}$		 &	 $(7, 6, 5, 4, 1)$ \\ \hline
			$\set{1, 2, 5, 6}$	 &	 $(7, 6, 5, 2)$ \\ \hline
			$\set{3, 5, 6}$		 &	 $(7, 6, 5, 4, 2, 1)$ \\ \hline
			$\set{1, 3, 5, 6}$	 &	 $(7, 6, 5, 3, 1)$ \\ \hline
			$\set{2, 3, 5, 6}$	 &	 $(7, 6, 5, 4, 2)$ \\ \hline
			$\set{1, 2, 3, 5, 6}$	 &	 $(7, 6, 5, 1)$ \\ \hline
			$\set{4, 5, 6}$	 	 &	 $(7, 6, 5, 4, 3, 2)$ \\ \hline
			$\set{1, 4, 5, 6}$	 &	 $(7, 6, 5, 3, 2, 1)$ \\ \hline
			$\set{2, 4, 5, 6}$	 &	 $(7, 6, 5, 4, 3, 1)$ \\ \hline
			$\set{1, 2, 4, 5, 6}$	 &	 $(7, 6, 5, 2, 1)$ \\ \hline
			$\set{3, 4, 5, 6}$	 &	 $(7, 6, 5, 4, 3, 2, 1)$ \\ \hline
			$\set{1, 3, 4, 5, 6}$	 &	 $(7, 6, 5, 3, 2)$ \\ \hline
			$\set{2, 3, 4, 5, 6}$	 &	 $(7, 6, 5, 4, 3)$ \\ \hline
			$\set{1, 2, 3, 4, 5, 6}$ &	 $(7, 6, 5)$ \\ \hline
		\end{tabular}\hspace{\dist}
		\begin{tabular}{r | l}
			$S$					 & $\varphi_n(S)$          \\
			\hline\hline
			$\set{7}$			 &	 $(8, 7)$ \\ \hline
			$\set{1, 7}$		 &	 $(8, 7, 1)$ \\ \hline
			$\set{2, 7}$		 &	 $(8, 7, 2, 1)$ \\ \hline
			$\set{1, 2, 7}$		 &	 $(8, 7, 2)$ \\ \hline
			$\set{3, 7}$		 &	 $(8, 7, 3, 2)$ \\ \hline
			$\set{1, 3, 7}$		 &	 $(8, 7, 3, 1)$ \\ \hline
			$\set{2, 3, 7}$		 &	 $(8, 7, 3, 2, 1)$ \\ \hline
			$\set{1, 2, 3, 7}$	 &	 $(8, 7, 3)$ \\ \hline
			$\set{4, 7}$		 &	 $(8, 7, 4, 3)$ \\ \hline
			$\set{1, 4, 7}$		 &	 $(8, 7, 4, 2)$ \\ \hline
			$\set{2, 4, 7}$		 &	 $(8, 7, 4, 3, 1)$ \\ \hline
			$\set{1, 2, 4, 7}$	 &	 $(8, 7, 4, 1)$ \\ \hline
			$\set{3, 4, 7}$		 &	 $(8, 7, 4, 3, 2, 1)$ \\ \hline
			$\set{1, 3, 4, 7}$	 &	 $(8, 7, 4, 2, 1)$ \\ \hline
			$\set{2, 3, 4, 7}$	 &	 $(8, 7, 4, 3, 2)$ \\ \hline
			$\set{1, 2, 3, 4, 7}$&	 $(8, 7, 4)$ \\ \hline
			$\set{5, 7}$		 &	 $(8, 7, 5, 4)$ \\ \hline
			$\set{1, 5, 7}$		 &	 $(8, 7, 5, 3)$ \\ \hline
			$\set{2, 5, 7}$		 &	 $(8, 7, 5, 4, 1)$ \\ \hline
			$\set{1, 2, 5, 7}$	 &	 $(8, 7, 5, 2)$ \\ \hline
			$\set{3, 5, 7}$		 &	 $(8, 7, 5, 4, 2, 1)$ \\ \hline
			$\set{1, 3, 5, 7}$	 &	 $(8, 7, 5, 3, 1)$ \\ \hline
			$\set{2, 3, 5, 7}$	 &	 $(8, 7, 5, 4, 2)$ \\ \hline
			$\set{1, 2, 3, 5, 7}$	 &	 $(8, 7, 5, 1)$ \\ \hline
			$\set{4, 5, 7}$	 	 &	 $(8, 7, 5, 4, 3, 2)$ \\ \hline
			$\set{1, 4, 5, 7}$	 &	 $(8, 7, 5, 3, 2, 1)$ \\ \hline
			$\set{2, 4, 5, 7}$	 &	 $(8, 7, 5, 4, 3, 1)$ \\ \hline
			$\set{1, 2, 4, 5, 7}$	 &	 $(8, 7, 5, 2, 1)$ \\ \hline
			$\set{3, 4, 5, 7}$	 &	 $(8, 7, 5, 4, 3, 2, 1)$ \\ \hline
			$\set{1, 3, 4, 5, 7}$	 &	 $(8, 7, 5, 3, 2)$ \\ \hline
			$\set{2, 3, 4, 5, 7}$	 &	 $(8, 7, 5, 4, 3)$ \\ \hline
			$\set{1, 2, 3, 4, 5, 7}$ &	 $(8, 7, 5)$ \\ \hline
		\end{tabular}\hspace{\dist}
		\begin{tabular}{r | l}
			$S$						 & $\varphi_n(S)$          \\
			\hline\hline
			$\set{6, 7}$			 &	 $(8, 7, 6, 5)$ \\ \hline
			$\set{1, 6, 7}$			 &	 $(8, 7, 6, 4)$ \\ \hline
			$\set{2, 6, 7}$			 &	 $(8, 7, 6, 5, 1)$ \\ \hline
			$\set{1, 2, 6, 7}$		 &	 $(8, 7, 6, 3)$ \\ \hline
			$\set{3, 6, 7}$			 &	 $(8, 7, 6, 5, 2, 1)$ \\ \hline
			$\set{1, 3, 6, 7}$		 &	 $(8, 7, 6, 4, 1)$ \\ \hline
			$\set{2, 3, 6, 7}$		 &	 $(8, 7, 6, 5, 2)$ \\ \hline
			$\set{1, 2, 3, 6, 7}$	 &	 $(8, 7, 6, 2)$ \\ \hline
			$\set{4, 6, 7}$			 &	 $(8, 7, 6, 5, 3, 2)$ \\ \hline
			$\set{1, 4, 6, 7}$		 &	 $(8, 7, 6, 4, 2, 1)$ \\ \hline
			$\set{2, 4, 6, 7}$		 &	 $(8, 7, 6, 5, 3, 1)$ \\ \hline
			$\set{1, 2, 4, 6, 7}$	 &	 $(8, 7, 6, 3, 1)$ \\ \hline
			$\set{3, 4, 6, 7}$		 &	 $(8, 7, 6, 5, 3, 2, 1)$ \\ \hline
			$\set{1, 3, 4, 6, 7}$	 &	 $(8, 7, 6, 4, 2)$ \\ \hline
			$\set{2, 3, 4, 6, 7}$	 &	 $(8, 7, 6, 5, 3)$ \\ \hline
			$\set{1, 2, 3, 4, 6, 7}$ &	 $(8, 7, 6, 1)$ \\ \hline
			$\set{5, 6, 7}$			 &	 $(8, 7, 6, 5, 4, 3)$ \\ \hline
			$\set{1, 5, 6, 7}$		 &	 $(8, 7, 6, 4, 3, 2)$ \\ \hline
			$\set{2, 5, 6, 7}$		 &	 $(8, 7, 6, 5, 4, 2)$ \\ \hline
			$\set{1, 2, 5, 6, 7}$	 &	 $(8, 7, 6, 3, 2, 1)$ \\ \hline
			$\set{3, 5, 6, 7}$		 &	 $(8, 7, 6, 5, 4, 3, 1)$ \\ \hline
			$\set{1, 3, 5, 6, 7}$	 &	 $(8, 7, 6, 4, 3, 1)$ \\ \hline
			$\set{2, 3, 5, 6, 7}$	 &	 $(8, 7, 6, 5, 4, 1)$ \\ \hline
			$\set{1, 2, 3, 5, 6, 7}$ &	 $(8, 7, 6, 2, 1)$ \\ \hline
			$\set{4, 5, 6, 7}$		 &	 $(8, 7, 6, 5, 4, 3, 2, 1)$ \\ \hline
			$\set{1, 4, 5, 6, 7}$	 &	 $(8, 7, 6, 4, 3, 2, 1)$ \\ \hline
			$\set{2, 4, 5, 6, 7}$	 &	 $(8, 7, 6, 5, 4, 2, 1)$ \\ \hline
			$\set{1, 2, 4, 5, 6, 7}$ &	 $(8, 7, 6, 3, 2)$ \\ \hline
			$\set{3, 4, 5, 6, 7}$	 &	 $(8, 7, 6, 5, 4, 3, 2)$ \\ \hline
			$\set{1, 3, 4, 5, 6, 7}$ &	 $(8, 7, 6, 4, 3)$ \\ \hline
			$\set{2, 3, 4, 5, 6, 7}$ &	 $(8, 7, 6, 5, 4)$ \\ \hline
			$\set{1, 2, 3, 4, 5, 6, 7}$	 &	 $(8, 7, 6)$ \\ \hline
		\end{tabular}
	\vspace*{2ex} 
	\caption{The values of $\varphi_n$ for $n \leq 8$. 
		Trailing zeros are omitted.}\label{tab:phi}
\end{table}
\end{landscape}



\end{document}